\theoremstyle{plain} 
\newtheorem{lemma}[equation]{Lemma} 
\newtheorem{theorem}[equation]{Theorem}
\theoremstyle{definition}
\theoremstyle{remark}
\newtheorem*{ack}{Acknowledgment}
\numberwithin{equation}{section}
\title {An Elementary Proof of the $ A_2$ Bound}
\author[M. T. Lacey]{Michael T. Lacey}   
\address{ School of Mathematics, Georgia Institute of Technology, Atlanta GA 30332, USA}
\email {lacey@math.gatech.edu}
\thanks{Research supported in part by grant NSF-DMS 1265570. }
\begin{document}

\maketitle 

\begin{abstract}
A martingale transform $ T$,  applied to an integrable locally supported function $ f$, is 
pointwise dominated by  a positive sparse operator applied to $ \lvert  f\rvert $, 
the choice of sparse operator being a function of $ T$ and $ f$.  
As a corollary, one derives the sharp $ A_p$ bounds for martingale transforms, recently proved by 
Thiele-Treil-Volberg, as well as a number of  new sharp weighted inequalities for martingale transforms. 
The (very easy) method of proof (a) only depends upon the weak-$L^1$ norm of maximal truncations of martingale 
transforms, (b)  applies in the  vector valued setting, and (c) has an extension to the 
continuous case, giving a new elementary proof of the $ A_2$ bounds in that setting. 
\end{abstract}

\section{Introduction} 

Our subject is weighted inequalities in Harmonic Analysis,  which started in the 1970's, 
and has been quite active in recent years.  A succinct history begins in 1973, 
when Hunt-Muckenhoupt-Wheeden \cite{MR0312139}
proved that the Hilbert transform $ H$ was bounded on $ L ^2 (w)$, for non-negative weight $ w$, 
if and only if the weight satisfied the Muckenhoupt  $ A_2$ condition \cite{MR0293384}.  One could have 
asked, even then, what the sharp dependence on the norm of $ H$ is, in terms of the $ A_2$ characteristic 
of the weight.  This was not resolved until 2007, by Petermichl \cite{MR2354322}, showing that norm of the Hilbert 
transform is linear in the $ A_2$ characteristic.  Five years later, 
among competing approaches,  Hyt\"onen \cite{MR2912709} established the $ A_2$ Theorem: For any 
Calder\'on-Zygmund operator $ T$, and any $ A_2$ weight, the norm estimate is linear in $ A_2$ characteristic.  
Hyt\"onen's original method depended upon the \emph{Hyt\"onen Representation Theorem}, \cite[Thm. 4.2]{MR2912709}, a novel  
expansion of $ T$ into a rapidly convergent series of \emph{discrete dyadic operators.}  
Subsequently, Lerner \cite{MR3085756} gave an alternate proof, cleverly  exploiting  his \emph{local mean oscillation inequality \cite{MR2721744}}  to prove a remarkable statement: 
The norm of Calder\'on-Zygmund operators on a Banach lattice is dominated by the operator norm of  a
class of very simple \emph{positive sparse operators} $ S \lvert  f\rvert $.  See \cite[(1.2)] {MR3085756}. 
The $ A_2$ bound is very easy  to establish for the sparse operators.   For more details on the history, see 
\cites{MR3204859,MR3085756}.  

The purpose of this note is to establish the \emph{pointwise control of $ Tf$}  by a sparse operator in a direct elementary fashion. 
 For each $ T$ and suitable $ f$, there is a choice of sparse operator $ \mathsf S$ 
so that $ \lvert  Tf\rvert \lesssim \mathsf S \lvert  f\rvert  $. 
Moreover, the recursive proof only requires elementary facts about  dyadic grids, and the weak integrability of the maximal 
truncations of $ T$.  (That is, we do not need Lerner's  mean oscillation inequality.)
Before this argument, pointwise bounds were independently established by \cites{14094351,150805639}. 
This argument, in the continuous case, requires a bit more on the operators than our result.

The argument is especially transparent in the setting of martingale transforms \S\ref{s:mart}. 
 The main result, Theorem~\ref{t:lerner},  proved in \S\ref{s:proof}, is new, and cannot be proved by 
 Lerner's  mean oscillation inequality.  
As  corollaries, we deduce the $ A_2$ bound for martingale transforms recently established by Thiele-Treil-Volberg \cite{TTV}, who employ more sophisticated techniques.  
A number of new, sharp weighted inequalities for martingales also follow as corollaries. 
The argument can be formulated in the Euclidean setting, giving a new self-contained proof 
of the $ A_2$ Theorem in that setting. 
Remarks and complements conclude the  paper. 

\begin{ack}
Guillermo Rey kindly pointed out some relevant references, as did the referee, who also 
pointed out some additional obscurities.  
\end{ack}

\section{Martingale Transforms} \label{s:mart}  

We introduce standard notation required for a discrete time martingale. 
Let $ (\Omega , \mathcal A, \mu )$ be a $ \sigma $-finite measure space, and let $ \{\mathcal Q_n \;:\; n\in \mathbb Z \}$ 
be  collections of measurable sets $\mathcal Q_n \subset \mathcal A $ such that 
\begin{enumerate}
\item   Each $ \mathcal Q_n$ is a partition of $ \Omega $, and $ 0 < \mu (Q) < \infty $ for all $ Q\in \mathcal Q_n$. 
\item  For each $ n$, and $ Q\in \mathcal Q_n$, the set $ Q$ is the union of sets $ Q'\in \mathcal Q _{n+1}$. 
(And $ Q'$ is allowed to equal $ Q$.) 
\item  The collection $ \mathcal Q :=  \bigcup _{n \in \mathbb Z }\mathcal Q_n $ generates $ \mathcal A$.  
\end{enumerate}
The set $ \mathcal Q$ is a tree  with respect to inclusion. (And the branching of the tree is unbounded in general.) 
Set $ Q ^{a}$ to be the minimal element $ P\in \mathcal Q$ which strictly contains $ Q$.  
In words,  $ Q ^{a}$ is the \emph{parent} of $ Q$.  Denote by $ \textup{ch} (P)$ those $ Q\in \mathcal Q$ 
such that $ Q ^{a}=P$, that is the \emph{children} of $ P$. 

The conditional expectation associated with $ \mathcal Q_n$ is 
\begin{equation}\label{e:EQ}
\mathbb E (f \;\vert\; \mathcal Q_n) := \sum_{Q\in \mathcal Q_n}  \mu (Q) ^{-1} \int _{Q} f \; d \mu \cdot \mathbf 1_{Q}. 
\end{equation}
Below, we will frequently use the abbreviation $ \langle  f \rangle_Q := \mu (Q) ^{-1} \int _{Q} f \; d \mu$. 
The \emph{martingale difference} associated with $ P \in \mathcal Q$ is 
\begin{equation}\label{e:dQ}
\Delta _P f :=  \sum_{\substack{Q\in  \textup{ch} (P)} }  \mathbf 1_{Q}  ( \langle f \rangle_{Q} - \langle f \rangle_P). 
\end{equation}
One should note that  $ P\in \mathcal Q_n$ is allowed to be in $ \mathcal Q _{n+1}$, 
so that the difference above is equal to  $ (\mathbb E  (f \mathbf 1_{P} \;\vert\; \mathcal Q _{n+1}) - \langle f \rangle_P) \mathbf 1_{P}$ only if $ P\in \mathcal Q_n$, but not $ \mathcal Q _{n+1}$.  

There are two kinds of operators we consider. The first are  \emph{martingale transforms}, namely operators of the form 
\begin{equation} \label{e:MT} 
T f = \sum_{P\in \mathcal Q} \epsilon _P \Delta _P f , \qquad \lvert  \epsilon _P\rvert\leq 1.  
\end{equation}
By the joint orthogonality of the martingale differences, $ T$ is an $ L ^2 $-bounded operator. 

And the second category are sparse operators.  An operator $ \mathsf S$ is \emph{sparse} if  
\begin{equation} \label{e:Sparse}
\mathsf S f = \sum_{P\in \mathcal S} \langle f \rangle_P \mathbf 1_{P} 
\end{equation}
where the collection $ \mathcal S$ satisfies this \emph{sparseness condition}:  
For each $ P\in \mathcal S$,  
\begin{equation}\label{e:sparse}
 \sum_{Q\in \textup{ch} _{\mathcal S} (P) }  \mu (Q) \leq \tfrac 12 \mu (P). 
\end{equation}
The sum on the left is restricted to the $ \mathcal S$-children of $ P$: The maximal elements of $ \mathcal S$ that 
are strictly contained in $ P$.  
It is well-known that both classes of operators are bounded in $ L ^{p}$, with constants uniform over the class of operators.

One of the main results is this extension of the Lerner inequality \cite{MR3085756} to the 
martingale setting, with a very simple proof, see \S\ref{s:proof}.

\begin{theorem}\label{t:lerner}  There is a constant $ C>0$ 
so that for all   functions  $ f\in L ^{1} (\mu )$ supported on $ Q_0 \in \mathcal Q$, 
and  martingale transforms   $ T$,  there is a sparse operator $ \mathsf S = \mathsf S _{T, f}$ so that 
\begin{equation}\label{e:lerner}
\mathbf 1_{Q_0}\lvert  T f \rvert \leq C \cdot  \mathsf S \lvert  f\rvert.  
\end{equation}
The same inequality holds for the maximal truncations, $ T _\sharp$, as defined in \eqref{e:sharp}. 

\end{theorem}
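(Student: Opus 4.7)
The plan is a stopping-time recursion that builds the sparse collection $\mathcal{S}$ generation by generation, rooted at $Q_0$, and simultaneously produces a pointwise bound of $|Tf|$ on each layer by a constant times the local average of $|f|$. Since both sides of \eqref{e:lerner} are additive over disjoint pieces of the initial cube, a single recursive step suffices, and iterating it telescopes to the full statement.

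At a given stage I have a cube $Q \in \mathcal{S}$ (initially $Q = Q_0$) together with the restricted input $g = f\mathbf{1}_Q$. I select the next generation of sparse children $\mathcal{Q}^{*}(Q) \subset \mathcal{Q}$ by a dual stopping condition: $\mathcal{Q}^{*}(Q)$ is the family of maximal cubes $Q' \subsetneq Q$ contained entirely in
\begin{equation}
E_Q := \{x \in Q : M_Q |g|(x) > K \langle |g|\rangle_Q\} \cup \{x \in Q : T_{\sharp} g(x) > K \langle |g|\rangle_Q\},
\end{equation}
where $M_Q$ is the dyadic maximal function localized to $Q$ and $K$ is a large constant. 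The weak $L^1$ bound for $M_Q$ is trivial, and the weak $L^1$ bound for $T_{\sharp}$ is the sole nontrivial input, guaranteed for martingale transforms. Choosing $K$ large gives $\mu(E_Q) \leq \tfrac12 \mu(Q)$, so that $\sum_{Q' \in \mathcal{Q}^{*}(Q)} \mu(Q') \leq \tfrac12 \mu(Q)$, which is exactly the sparseness condition \eqref{e:sparse} at $Q$.

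For the pointwise bound I decompose $Tg$ according to whether the martingale difference $\Delta_P$ sits below some stopping cube or not:
\begin{equation}
Tg = \sum_{Q' \in \mathcal{Q}^{*}(Q)} T(g \mathbf{1}_{Q'}) + R_Q, \qquad R_Q := \sum_{\substack{P \text{ with } P \not\subseteq Q' \\ \text{for every } Q' \in \mathcal{Q}^{*}(Q)}} \epsilon_P \Delta_P g.
\end{equation}
The first term is passed to the recursion applied inside each $Q'$ with input $g\mathbf{1}_{Q'}$. For $R_Q(x)$ at a good point $x \in Q \setminus \bigcup \mathcal{Q}^{*}(Q)$, every partial sum of $R_Q$ at $x$ is realized as a partial sum of $Tg$ at $x$, so $|R_Q(x)| \leq T_{\sharp} g(x) \leq K \langle|g|\rangle_Q$ by the very construction of $E_Q$. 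This yields the one-step estimate
\begin{equation}
\mathbf{1}_Q |Tg| \leq CK\, \langle|g|\rangle_Q\, \mathbf{1}_Q + \sum_{Q' \in \mathcal{Q}^{*}(Q)} \mathbf{1}_{Q'} |T(g\mathbf{1}_{Q'})|,
\end{equation}
and iterating it across all generations of $\mathcal{S}$ gives $\mathbf{1}_{Q_0}|Tf| \leq CK \sum_{P \in \mathcal{S}} \langle|f|\rangle_P \mathbf{1}_P = CK\, \mathsf{S}|f|$. The analogous bound for $T_{\sharp}$ follows from exactly the same decomposition, using the subadditivity of $T_{\sharp}$ with respect to $g = g\mathbf{1}_{Q \setminus \bigcup Q'} + \sum_{Q'} g\mathbf{1}_{Q'}$.

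The main obstacle I expect is the honest identification of $R_Q$ with a quantity controlled by $T_{\sharp} g$ on the good set. This is a combinatorial check about dyadic ancestors: for $x \in Q \setminus \bigcup \mathcal{Q}^{*}(Q)$ the cubes $P \ni x$ either sit above $Q$ or above some stopping cube $Q'$, and in either case the partial sums of $R_Q$ at $x$ are realized as partial sums already visible to $T_{\sharp} g(x)$. A secondary issue is that when recursing into $Q'$ one needs the maximal function and maximal truncation to behave well under localization to $Q'$; this comes for free because at scales $P \subseteq Q'$ one has $g = g\mathbf{1}_{Q'}$, so the stopping inequalities inherited from the parent stage translate directly into the hypotheses required at the next stage.
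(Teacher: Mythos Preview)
Your approach is essentially the paper's own (stopping-time recursion driven by the weak-$L^1$ bounds for $M$ and $T_\sharp$), but there is a genuine gap. You only verify the bound $|R_Q(x)|\le K\langle|g|\rangle_Q$ at \emph{good} points $x\in Q\setminus\bigcup Q'$, yet your one-step estimate is asserted on all of $Q$. At a bad point $x\in Q'$ you need it too, and there it fails as written. Indeed, for $x\in Q'$ every $P\ni x$ with $P\not\subseteq Q'$ satisfies $P\supsetneq Q'$, so
\[
R_Q(x)=\sum_{P\supsetneq Q'}\epsilon_P\Delta_P g(x)
=\epsilon_{(Q')^a}\bigl(\langle g\rangle_{Q'}-\langle g\rangle_{(Q')^a}\bigr)
+\sum_{P\supsetneq (Q')^a}\epsilon_P\Delta_P g(x).
\]
The maximality of $Q'$ controls $\langle|g|\rangle_{(Q')^a}$ and the last sum (which is constant on $(Q')^a$ and hence visible to $T_\sharp g$ at a good point of $(Q')^a$). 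But $\langle g\rangle_{Q'}$ is \emph{not} controlled by the stopping data: the good point in $(Q')^a$ need not lie in $Q'$, so neither $M_Q$ nor $T_\sharp$ sees this average. The paper's remedy is to split $\Delta_{(Q')^a}$ and push the uncontrolled piece into the recursion: the object passed down is
\[
T_{Q'}f:=\epsilon_{(Q')^a}\langle f\rangle_{Q'}\mathbf 1_{Q'}+\sum_{P\subseteq Q'}\epsilon_P\Delta_P f,
\]
not $T(f\mathbf 1_{Q'})$. Since $|T_{Q'}f|\le \langle|f|\rangle_{Q'}+\bigl|\sum_{P\subseteq Q'}\epsilon_P\Delta_P f\bigr|$, the extra constant is harmlessly absorbed at the next step of the recursion.

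A related notational slip: the displayed identity $Tg=\sum_{Q'}T(g\mathbf 1_{Q'})+R_Q$ is false as written, because $T(g\mathbf 1_{Q'})$ contains nonzero contributions $\Delta_P(g\mathbf 1_{Q'})$ from $P\supsetneq Q'$. What you actually split off is the localized transform $\sum_{P\subseteq Q'}\epsilon_P\Delta_P g$; once you make that correction and move the $\epsilon_{(Q')^a}\langle g\rangle_{Q'}$ term as above, your argument coincides with the paper's.
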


From this Theorem, one can deduce a range of sharp inequalities for martingale transforms by 
simply repeating the proofs for sparse operators on Euclidean space, using for instance the arguments in
\cite[\S2]{MR3127380}.

Consider two weights $ w, \sigma $. For pair, we have the joint $ A_p$ condition, $ 1< p < \infty $,  given by 
\begin{equation}\label{e:jointA2}
[\sigma, w ] _{A_p} := \sup _{Q\in \mathcal Q } \langle \sigma  \rangle_Q  ^{p-1}\langle w \rangle_Q  
= \sup _{n}\bigl[ \lVert \mathbb E (\sigma \;\vert\; \mathcal Q_n)  \bigr] ^{p-1}\mathbb E (w \;\vert\; \mathcal Q_n)\rVert _{\infty } . 
\end{equation}
If $ w $ is non-negative $ \mu $-almost everywhere, and $  w ^{1/(1-p)}$ is also a weight  (that is, $ w ^{1/(1-p)}$ is  locally integrable), 
then $ [w ^{1/(1-p)} ,w] _{A_p} = [w] _{A_p}$ is the $ A_p$ characteristic of $ w$.  

This is the $ A_2$ Theorem in the martingale setting, as proved by \cite{TTV}.  
After the domination by sparse operators, one can follow the short and self-contained
proof of \cite{MR3000426}*{Thm 3.1}.

\begin{theorem}\label{t:A2}  For any martingale transform or sparse operators $ T$, 
 for any $ 1< p < \infty $,  and $ A_p $ weights $ w$, 
\begin{equation}
\lVert T   \;:\; L ^p (w ) \mapsto L ^p (w)\rVert   \lesssim [w] _{A_p} ^{ \min\{ 1, 1/(p-1)\}} .   
\end{equation}
\end{theorem}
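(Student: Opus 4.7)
The plan is to combine the pointwise sparse domination from Theorem~\ref{t:lerner} with the standard $L^p(w)$ bound for sparse operators. The argument splits into two stages.

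Stage~1 is to reduce martingale transforms to sparse operators. Theorem~\ref{t:lerner} is stated for $f$ supported in a single $Q_0 \in \mathcal Q$, so to remove this restriction I use $\sigma$-finiteness of $\mu$ to exhaust $\Omega$ by an increasing sequence $Q_0^{(k)} \in \mathcal Q$, apply Theorem~\ref{t:lerner} to each $f \mathbf 1_{Q_0^{(k)}}$, and pass to the limit via monotone convergence (the maximal-truncation form of the theorem ensures the partial estimates are dominated uniformly). This leaves the task of proving
\begin{equation*}
\|\mathsf S : L^p(w) \to L^p(w)\| \lesssim [w]_{A_p}^{\min\{1, 1/(p-1)\}}
\end{equation*}
for every sparse operator $\mathsf S$.

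Stage~2 is the sparse estimate. By duality against $L^{p'}(w)$ it suffices to bound, for non-negative $f, g$, the bilinear form
\begin{equation*}
\int(\mathsf S f)\,g\,w\,d\mu = \sum_{P \in \mathcal S} \langle f \rangle_P \int_P g w \, d\mu.
\end{equation*}
Introducing the conjugate weight $\sigma := w^{1-p'}$ and setting $f = F\sigma$ (so that $\|F\|_{L^p(\sigma)} = \|f\|_{L^p(w)}$, since $\sigma^{p-1} w \equiv 1$), the sum rearranges as
\begin{equation*}
\sum_{P \in \mathcal S} \langle F \rangle_P^\sigma \,\langle g \rangle_P^w \cdot \langle\sigma\rangle_P \langle w\rangle_P \cdot \mu(P),
\end{equation*}
where $\langle h \rangle_P^\nu := \nu(P)^{-1} \int_P h\,\nu\,d\mu$. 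Sparseness \eqref{e:sparse} supplies disjoint major subsets $E_P \subset P$ with $\mu(E_P) \geq \mu(P)/2$, and the pointwise identity $\sigma^{1/p'} w^{1/p} \equiv 1$ combined with Hölder gives $\mu(E_P) \leq \sigma(E_P)^{1/p'} w(E_P)^{1/p}$. These two inputs permit a Hölder splitting in $\ell^{p'} \times \ell^p$, which (by disjointness of $\{E_P\}$ and the $L^p$-boundedness of the weighted dyadic maximal operators $M^\sigma$, $M^w$) reduces the whole estimate to controlling the factor $\langle \sigma \rangle_P \langle w \rangle_P$.

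The main technical obstacle is this last factor. For $p = 2$, the $A_2$ definition \eqref{e:jointA2} bounds it by $[w]_{A_2}$ uniformly, the Hölder exponents match the target spaces, and two applications of Cauchy--Schwarz complete the argument, giving the sharp linear dependence. For $p \neq 2$, the joint average is no longer uniformly controlled by $[w]_{A_p}$, and the Hölder exponents cease to match $L^p(\sigma)$ and $L^{p'}(w)$; following \cite{MR3000426}*{Thm.~3.1} I would allocate the factor $\langle \sigma \rangle_P \langle w \rangle_P$ asymmetrically between the two sides of the Hölder split and absorb the residual power of $w$ into one of the maximal operators, tracking exponents carefully. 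The range $p < 2$ then reduces to $p > 2$ by duality, exploiting the self-duality $\mathsf S = \mathsf S^*$ under the unweighted pairing together with the identity $[\sigma]_{A_{p'}} = [w]_{A_p}^{1/(p-1)}$.
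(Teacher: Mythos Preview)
Your proposal is correct and follows exactly the route the paper indicates: reduce to sparse operators via Theorem~\ref{t:lerner}, then invoke the weighted bound for sparse operators from \cite{MR3000426}*{Thm.~3.1}. The paper itself gives no further detail beyond that citation, and your Stage~2 is a faithful sketch of Moen's argument (bilinear form, substitution $f=F\sigma$, sparseness via disjoint $E_P$, H\"older with the identity $\sigma^{1/p'}w^{1/p}\equiv 1$, weighted maximal functions, and duality to pass from $p>2$ to $p<2$).
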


One draws the corollary  that the martingale differences  $ \Delta _Q $ are unconditionally convergent 
in $ L ^{p} (w)$, for all $ 1< p < \infty $, and all $ A_p$ weights $ w$, which  fact has been known 
since the 1970's.  We return to  this topic in \S\ref{s:complements}.

Many finer results are also corollaries, and the reader can use \cite[\S2]{MR3127380}, and  \cites{MR3129101,13103507,MR3127385,MR3085756,14080385} as guides to these results. 

One further application is to certain martingale paraproduct operators.  The Theorem below follows from an easy 
modification of  the proof that follows,  but we suppress the details.  
Define an operator $ \Pi  f := \sum_{Q \in \mathcal Q} \langle f \rangle_Q \cdot b_Q$, where 
$ \{b_Q \;:\; Q\in \mathcal Q\}$ are a sequence of functions which satisfy $ \Delta _Q b_Q = b_Q$, and this 
normalization condition. 
\begin{equation*}
\sup _{P} \mu (P) ^{-1} \sum_{Q \;:\; Q\subset P} \lVert b_Q\rVert _{\infty } ^2 \mu (  Q) \leq 1.  
\end{equation*}
It is an easy consequence of the Carleson embedding theorem that $ \Pi  $ is an $ L ^2 $-bounded operator.  
They are considered in \cite{14080385}*{Thm 2.5}, and they too can be dominated by a sparse operator. 

\begin{theorem}\label{t:para}  Let $ f \in L ^{1} (\Omega ) $ be supported on $ Q_0\in \mathcal Q$. 
Then, there is a sparse operator $ \mathsf S$ such that 
$ \mathbf 1_{Q_0} \lvert  \Pi f \rvert \lesssim \mathsf S  f $.  
\end{theorem}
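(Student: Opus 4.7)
The plan is to adapt the recursive stopping-time argument used for Theorem~\ref{t:lerner} (whose details appear in \S\ref{s:proof}) to the paraproduct $\Pi$. Introduce the maximal truncation
\[
\Pi_\sharp f(x) := \sup_{N\in\mathbb Z}\Bigl|\sum_{\substack{Q\in\mathcal Q\\ \mu(Q)\geq 2^{-N}}}\langle f\rangle_Q\, b_Q(x)\Bigr|.
\]
The first ingredient I would establish is the weak-type $(1,1)$ bound $\lVert\Pi_\sharp f\rVert_{L^{1,\infty}(\mu)}\lesssim \lVert f\rVert_{L^1(\mu)}$. This is classical for martingale paraproducts: perform a Calder\'on--Zygmund decomposition of $f$ at height $\lambda$, handle the good part by the $L^2$-boundedness of $\Pi$ that the Carleson embedding theorem supplies, and handle the bad part directly using the cancellation $\Delta_Q b_Q=b_Q$.

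With the weak-$(1,1)$ bound in hand the greedy construction runs exactly as for $T$. Declare the $\mathcal S$-children of $Q_0$ to be the maximal cubes $P\subsetneq Q_0$ for which either $\langle\lvert f\rvert\rangle_P$ exceeds a large fixed multiple of $\langle\lvert f\rvert\rangle_{Q_0}$, or for which $\Pi_\sharp(f\mathbf 1_{Q_0})>C\langle\lvert f\rvert\rangle_{Q_0}$ on at least half of $P$. Combining the weak-type bound for $\Pi_\sharp$ with the weak-type bound for the martingale maximal function forces
\[
\sum_{P\in\textup{ch}_{\mathcal S}(Q_0)}\mu(P)\leq \tfrac12\,\mu(Q_0),
\]
i.e.\ the sparseness condition \eqref{e:sparse}. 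On the residual set $Q_0\setminus\bigcup_P P$ the stopping time yields $\lvert\Pi f\rvert\leq C\langle\lvert f\rvert\rangle_{Q_0}$, which produces the $Q_0$-term of $\mathsf S$. Recursing inside each selected child assembles the sparse family.

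The one ingredient that really differs from the martingale-transform case, and the step I expect to be the main obstacle, is the localization needed to close the recursion. For a martingale transform, $T(f\mathbf 1_{Q_0})$ is supported in $Q_0$ and its restriction to a child $P\in\textup{ch}_{\mathcal S}(Q_0)$ agrees with $T(f\mathbf 1_P)$ up to terms already controlled at stage $Q_0$. For the paraproduct the tail $\sum_{Q\supsetneq P}\langle f\rangle_Q b_Q$ contributes on $P$. The key observation is that $\Delta_Q b_Q=b_Q$ forces each such $b_Q$ to be constant on $P$, so the tail is a single constant $c_P$ on $P$. Writing $c_P=\Pi(f\mathbf 1_{Q_0})(x_P)-\Pi(f\mathbf 1_P)(x_P)$ for any $x_P\in P$ chosen outside all deeper stopping events and invoking the $\Pi_\sharp$-stopping condition at $Q_0$ yields $\lvert c_P\rvert\lesssim\langle\lvert f\rvert\rangle_{Q_0}$. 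This constant is absorbed into the $Q_0$-term of $\mathsf S$, and after the absorption $\Pi(f\mathbf 1_{Q_0})\restriction_P$ differs from $\Pi(f\mathbf 1_P)\restriction_P$ only by a harmless constant; the recursion inside $P$ then supplies the remaining terms of $\mathsf S$.
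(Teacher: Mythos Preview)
Your overall strategy is exactly the modification the paper intends: run the recursion of \S\ref{s:proof} with a maximal truncation $\Pi_\sharp$ in place of $T_\sharp$, once a weak-$L^1$ bound for $\Pi_\sharp$ is in hand. That part, and your sketch of the weak-type inequality, are fine.

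The gap is in your control of the tail constant $c_P$. You want to evaluate at some $x_P\in P$ ``outside all deeper stopping events'' and bound $\Pi(f\mathbf 1_{Q_0})(x_P)$ via the $\Pi_\sharp$-stopping condition at level $Q_0$. But $P$ is itself a stopping cube, and nothing in your ``at least half of $P$'' criterion (nor in the simpler criterion $P\subset E$) prevents $\Pi_\sharp f>C\langle\lvert f\rvert\rangle_{Q_0}$ on \emph{all} of $P$; there need not be any admissible $x_P$. Separately, localizing the function (passing to $\Pi(f\mathbf 1_P)$) rather than the operator leaves a residual tail $\sum_{Q\supsetneq P}\langle f\mathbf 1_P\rangle_Q\,b_Q$ that is not obviously small in a non-homogeneous filtration.

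The fix is to mirror \eqref{e:T=} exactly. Drop the ``half of $P$'' rule, take $\mathcal E$ to be the maximal cubes contained in $E=\{\max\{Mf,\Pi_\sharp f\}>\tfrac12 C_0\langle\lvert f\rvert\rangle_{Q_0}\}$, and for $x\in P\in\mathcal E$ write
\[
\Pi f(x)=\sum_{Q\,:\,P^a\subsetneq Q}\langle f\rangle_Q\, b_Q(x)\;+\;\langle f\rangle_{P^a}\,b_{P^a}(x)\;+\;\Pi_P f(x),\qquad \Pi_P f:=\sum_{Q\subset P}\langle f\rangle_Q\, b_Q.
\]
The first sum is constant on $P^a$ and is one of the truncations; by maximality of $P$ there is $y\in P^a\setminus E$, where this sum is at most $\Pi_\sharp f(y)\le\tfrac12 C_0\langle\lvert f\rvert\rangle_{Q_0}$. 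For the middle term, the Carleson normalization (take the outer cube equal to $Q$) forces $\lVert b_Q\rVert_\infty\le 1$, so $\lvert\langle f\rangle_{P^a}b_{P^a}(x)\rvert\le\langle\lvert f\rvert\rangle_{P^a}\le Mf(y)\le\tfrac12 C_0\langle\lvert f\rvert\rangle_{Q_0}$. Then recurse on the localized operator $\Pi_P$, not on $\Pi(f\mathbf 1_P)$.
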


A number of weighted inequalities for these operators is an immediate consequence, including both of the  main results of  \cite{14080385}. 

\section{Domination by Sparse Operators: Martingales} \label{s:proof}

We give the proof of Theorem~\ref{t:lerner}, with the  basic fact being the 
weak-$ L ^{1}$ inequality for maximal truncations of martingale  transforms.

\begin{theorem}\label{t:1}[Burkholder \cite[Thm 6]{MR0208647}] For any martingale transform $ T$ we have 
\begin{equation}\label{e:1}
\sup _{\lambda >0} \lambda \mu ( T _\sharp f > \lambda ) \lesssim   \lVert f\rVert _{L ^{1} (\mu )}, 
\end{equation}
where $ T _\sharp f$ is the maximal truncation, given by 
\begin{equation} \label{e:sharp}
T _\sharp f :=  \sup _{Q'} \Bigl\lvert \sum_{\substack{Q \in \mathcal P \\ Q\supset Q'}} \epsilon _Q \Delta _Q f \Bigr\rvert. 
\end{equation}

\end{theorem}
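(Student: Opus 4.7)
The plan is the classical Calder\'on--Zygmund stopping-time decomposition of $f$ at height $\lambda$, combined with the $L^2$-boundedness of $T_\sharp$ coming from Doob's inequality. Let $\mathcal{J}_\lambda$ collect the maximal $Q \in \mathcal Q$ with $\langle|f|\rangle_Q > \lambda$. These are pairwise disjoint, so $\mu(E)\leq\lambda^{-1}\lVert f\rVert_{L^1(\mu)}$ with $E := \bigcup_{Q\in\mathcal J_\lambda}Q$; since $\mathcal Q$ generates $\mathcal A$, martingale convergence yields $|f|\leq\lambda$ $\mu$-a.e.\ on $E^c$. Split $f = g + b$ by
\begin{equation*}
g := f\mathbf 1_{E^c} + \sum_{Q\in\mathcal J_\lambda}\langle f\rangle_Q\mathbf 1_Q,
\qquad
b := \sum_{Q\in\mathcal J_\lambda} b_Q, \quad b_Q := (f-\langle f\rangle_Q)\mathbf 1_Q,
\end{equation*}
and aim for $\mu(T_\sharp g > \lambda/2) + \mu(T_\sharp b > \lambda/2) \lesssim \lambda^{-1}\lVert f\rVert_{L^1(\mu)}$.

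For the bad part the key observation is that each $b_Q$ is supported on $Q$ with $\int b_Q\, d\mu = 0$, so $\langle b_Q\rangle_P=0$ for every $P \in \mathcal Q$ with $P\supseteq Q$, and consequently $\Delta_P b_Q\equiv 0$ for every $P\supsetneq Q$. Thus every partial sum $\sum_{P\supseteq Q'}\epsilon_P \Delta_P b_Q$ vanishes off $Q$; summing over $Q\in\mathcal J_\lambda$ shows $T_\sharp b$ is supported in $E$, and $\mu(T_\sharp b>\lambda/2)\leq\mu(E)\leq\lambda^{-1}\lVert f\rVert_{L^1(\mu)}$. For the good part I would use $\lVert T_\sharp g\rVert_{L^2(\mu)}\lesssim \lVert g\rVert_{L^2(\mu)}$: for $\mu$-a.e.\ $x$ the sequence $S_n(x):=\sum_{P\supseteq Q_n(x)}\epsilon_P\Delta_P g(x)$ (with $Q_n(x)\in\mathcal Q_n$ the cube containing $x$) is a martingale, $T_\sharp g(x)=\sup_n|S_n(x)|$, and $L^2$-orthogonality of the $\Delta_P g$ combined with Doob's inequality controls its supremum. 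Chebyshev then gives $\mu(T_\sharp g>\lambda/2)\lesssim \lVert g\rVert_{L^2(\mu)}^2/\lambda^2$, reducing everything to proving $\lVert g\rVert_{L^2(\mu)}^2 \lesssim \lambda\lVert f\rVert_{L^1(\mu)}$.

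The main obstacle is precisely this $L^2$ estimate on $g$. The portion over $E^c$ is immediate from $|g|\leq\lambda$. The portion over $E$ reduces to $\sum_Q|\langle f\rangle_Q|^2\mu(Q)\leq \sum_Q\langle|f|\rangle_Q\int_Q|f|\,d\mu$, which is $\lesssim\lambda\lVert f\rVert_{L^1(\mu)}$ provided $\langle|f|\rangle_Q\lesssim\lambda$ for each $Q\in\mathcal J_\lambda$. In a bounded-branching setting this follows from maximality applied to the parent $Q^a$, since $\langle|f|\rangle_{Q^a}\leq\lambda$ and $\mu(Q^a)/\mu(Q)$ is controlled. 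Under the unbounded branching permitted by the present axioms this can fail (a tiny child may carry a huge average even when its parent is below $\lambda$), and the standard remedy is Gundy's three-piece decomposition $f=g+b+h$: an additional martingale $h$ absorbs the jumps $\sum_Q(\langle f\rangle_Q-\langle f\rangle_{Q^a})\mathbf 1_Q$, with $\lVert h\rVert_{L^1(\mu)}\lesssim\lVert f\rVert_{L^1(\mu)}$ and $T_\sharp h$ concentrated on $E$ via the same mean-zero support argument used for $b$; the redefined $g$ then genuinely satisfies $\lVert g\rVert_\infty\lesssim\lambda$, making the $L^2$ step go through and completing \eqref{e:1}.
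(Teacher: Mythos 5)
The paper does not prove Theorem~\ref{t:1}; it quotes it as a known result of Burkholder \cite[Thm 6]{MR0208647} and uses it as a black box, so there is no internal proof to compare against. Evaluated on its own terms, your proposal follows the expected Calder\'on--Zygmund/Gundy route, and the first two thirds are sound: the stopping family $\mathcal J_\lambda$, the support argument for the bad part (since $\int_Q b_Q\,d\mu=0$ forces $\Delta_P b_Q\equiv 0$ for all $P\supsetneq Q$, so $T_\sharp b$ vanishes off $E$), and the reduction of the good part to an $L^2$ bound via orthogonality and Doob are all correct. You also correctly diagnose the real obstruction: with unbounded branching, $\langle|f|\rangle_Q$ for a stopping cube $Q$ need not be $\lesssim\lambda$, so the naive two-term split does not give $\lVert g\rVert_\infty\lesssim\lambda$.

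The final paragraph, however, contains a genuine gap. The function $h=\sum_{Q\in\mathcal J_\lambda}(\langle f\rangle_Q-\langle f\rangle_{Q^a})\mathbf 1_Q$ is supported on $E$, but it is \emph{not} mean zero on each $Q$: $\int_Q h\,d\mu=(\langle f\rangle_Q-\langle f\rangle_{Q^a})\mu(Q)\neq 0$ in general. Consequently $\Delta_P h\not\equiv 0$ for $P\supsetneq Q$, and $T_\sharp h$ is not supported in $E$ --- so the ``same mean-zero support argument used for $b$'' does not apply to $h$. Nor does the other natural route, a termwise $L^1$ bound on the differences, rescue it: for a single $Q$, a short computation gives $\lVert\Delta_P\bigl(c\,\mathbf 1_Q\bigr)\rVert_{L^1}=2|c|\mu(Q)\bigl(1-\mu(R_0)/\mu(P)\bigr)$, where $R_0$ is the child of $P$ containing $Q$, and summing over the ancestors $P\supsetneq Q$ diverges even in the standard dyadic setting (each ancestor contributes a term of size $\sim|c|\mu(Q)$). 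The point of Gundy's decomposition is precisely that the jump at the stopping time $\tau$ is booked \emph{once}, as a martingale difference at level $n=\tau$ with a compensator, not re-expanded across every ancestor; that is what yields $\sum_n\lVert\Delta_n h\rVert_{L^1}\lesssim\lVert f\rVert_{L^1}$ and hence $\lVert T_\sharp h\rVert_{L^1}\leq\sum_n\lVert\Delta_n h\rVert_{L^1}\lesssim\lVert f\rVert_{L^1}$. Those bookkeeping details are the content of the Gundy step, and as written your argument does not close without them.
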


\begin{proof}[Proof of Theorem~\ref{t:lerner}] 
We concentrate on the case of  $ T$ being a martingale transform as in \eqref{e:MT}, with the changes to accommodate the maximal truncations being  easy to provide.

Apply the weak $ L ^{1}$ inequality for the maximal function $ M f $, 
and the maximal truncations of the martingale transform.  
Thus, there is a constant $ C_0$ so large that the set 
\begin{equation}\label{e:W1}
E := \{   \max\{M f ,\ T _\sharp f \}> \tfrac 12 C_0 \langle \lvert  f\rvert  \rangle_{Q_0}\}  
\end{equation}
satisfies $ \mu (E) \le \tfrac 12 \mu (Q_0)$.  
Let $ \mathcal E$ be the collection of maximal elements of $ \mathcal Q$ contained in $ E$.  
We claim that 
\begin{align}\label{e:zclaim}
\lvert  T f (x)\rvert \mathbf 1_{Q_0} & \leq C_0 \langle  \lvert  f\rvert \rangle_{Q_0}  +   
\sum_{P\in \mathcal E} \lvert  T _P f (x)\rvert , 
\\
\textup{where} \quad 
T_P f &:=   \epsilon _{P ^{a}} \langle f \rangle_P \mathbf 1_{P} + \sum_{Q \;:\; Q\subset P} \epsilon _Q \Delta _Q f . 
\end{align}
Here, $ P ^{a}$ is the parent of $ P$.   It is clear that we will add $ Q_0$ to   $ \mathcal S$, the sparse collection defining $ \mathsf S$.  And, one should recurse on the $ T_P f$, at which stage the collection $ \mathcal E$ becomes the $ \mathcal S$-children of $ Q_0$.  Continuing the recursion will complete the proof.  

If $ x\in Q_0 \setminus E$,   certainly    \eqref{e:zclaim} is true.  
For $ x\in E$ then there is a unique $ P\in \mathcal E$ with $ x \in P$, for which we can write 
\begin{align} \label{e:T=}
T f  (x) &=  
 \sum_{Q \;:\;  P ^{a}\subsetneqq  Q } \epsilon _{Q} \Delta _Q f (x)  - \epsilon _{P ^{a}} \langle f \rangle _{ P ^{a}}  + T _P f (x)  . 
\end{align}
Note that the martingale difference $ \Delta _{P ^{a}} f (x)$ is split between the second and third terms above.  
The first two terms on the right in \eqref{e:T=} are, by construction, bounded by $\tfrac 12  C_0 \langle \lvert  f\rvert  \rangle _{Q_0}$.  Thus \eqref{e:zclaim} follows.

\end{proof}

\section{Domination by Sparse Operators:  Euclidean Case} \label{s:euclid}

We extend the martingale proof of \eqref{e:lerner} to the Euclidean setting, yielding an inequality that 
applies to (a) the most general class of Calder\'on-Zygmund operators, and (b)  in the vector valued setting. 

Let $ T$ be an $ L ^2  (\mathbb R ^{d})$ bounded operator, of norm 1, and  with kernel $ K (x,y)$. Namely for 
all $ f, g \in C _0 (\mathbb R ^{d})$, whose  closed supports that  do not intersect, we have 
\begin{equation*}
\langle Tf, g \rangle = \int\!\int K (x,y) f (y) g (x) \; dx\, dy . 
\end{equation*}
Let $ \omega \;:\; [0, \infty ) \mapsto [0, 1 ) $ be a modulus of continuity, that is 
a monotone increasing and subadditive function.  The operator $ T$ is said to be a 
Calder\'on-Zygmund operator if  the kernel $ K (x,y)$ satisfies 
\begin{equation} \label{e:smooth}
\lvert  K (x,y) - K (x',y)\rvert \le \omega \Bigl(\frac {\lvert  x-x'\rvert } {\lvert  x-y\rvert } \Bigr) \frac 1 {\lvert  x-y\rvert ^{d} },  
\end{equation}
and the dual condition, with the roles of $ x$ and $ y$ reversed, holds. 
The classical condition to place on $ \omega $ is the \emph{Dini condition} 
\begin{equation} \label{e:dini}
\int ^{1} _{0}   \omega (t)  \; \frac {dt}t < \infty . 
\end{equation}.

The Theorem below is a stronger version of Lerner's inequality \cite{MR3085756}, and its extension 
in \cite{14094351}*{Corollary A1}. 
Moreover, it only assumes  the Dini condition, while 
prior approaches  \cites{MR3065022,MR3085756,14094351}   require 
$ 1/t$ in the Dini integral be  replaced by $( \log 2/t)/t$.  

\begin{theorem}\label{t:continuous}
Let $ T$ be a Calder\'on-Zygmund operator for which the modulus of continuity satisfies the Dini condition 
\eqref{e:dini}. 
Then, for any compactly supported function $ f\in L ^{1} (\mathbb R ^{d})$,   we have 
$
\lvert  T f \rvert \lesssim \mathsf S \lvert  f\rvert  
$. 
where $ \mathsf S  = \mathsf S (f, T)$  is  a sum of at most  $ 3 ^{d}$ operators, each of which are 
sparse relative to a choice of dyadic grid on $ \mathbb R ^{d}$.   The same conclusion holds for the maximal 
truncations $ T _{\sharp}$, as defined in \eqref{e:Sharp}. 
\end{theorem}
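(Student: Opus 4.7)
The plan is to replicate the elementary martingale argument of Section~\ref{s:proof}, substituting the Euclidean analogues at each step. The role of Burkholder's weak-$L^1$ bound (Theorem~\ref{t:1}) is played by the classical weak-$L^1$ estimate for the maximal truncation $T_\sharp$ of a Dini-modulus Calder\'on-Zygmund operator, which follows from a Calder\'on-Zygmund decomposition of $f$ together with the H\"ormander-type condition implied by \eqref{e:dini}. Crucially, this input requires only \eqref{e:dini} and not the $\log$-Dini strengthening invoked in \cites{MR3065022,MR3085756,14094351}.

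Fix one dyadic grid $\mathcal{D}$ and a cube $Q_0\in\mathcal{D}$ containing $\operatorname{supp} f$. In direct analogy with \eqref{e:W1}, set
\[
E := \bigl\{x\in Q_0 : \max\{Mf(x),\,T_\sharp f(x)\} > C_0\langle|f|\rangle_{Q_0}\bigr\},
\]
choose $C_0$ so that $\lvert E\rvert \le \tfrac12\lvert Q_0\rvert$, and let $\mathcal{E}$ be the maximal cubes of $\mathcal{D}$ in $E$. For $x\in Q_0\setminus\bigcup\mathcal{E}$, the bound $\lvert Tf(x)\rvert \lesssim \langle|f|\rangle_{Q_0}$ is immediate from Cotlar's inequality, since both $Mf(x)$ and $T_\sharp f(x)$ are at most $C_0\langle|f|\rangle_{Q_0}$. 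For $x\in P\in\mathcal{E}$, decompose
\[
Tf(x) = T(f\mathbf{1}_{Q_0\setminus P})(x) + T(f\mathbf{1}_P)(x);
\]
the second summand is the local piece, handled by recursion on $P$ (producing the $\mathcal{S}$-children of $Q_0$), precisely in analogy with $T_Pf$ in \eqref{e:T=}.

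The main technical task is to bound $\lvert T(f\mathbf{1}_{Q_0\setminus P})(x)\rvert$ for $x\in P$ by a constant multiple of $\langle|f|\rangle_{\widehat P}$, where $\widehat P$ is a cube of size $\sim\ell(P)$ containing $3P$. Since $P$ is maximal in $E$, its dyadic parent $P^a$ contains a point $y\notin E$, so $T_\sharp f(y)\le C_0\langle|f|\rangle_{Q_0}$; the truncation at scale $\ell(P)$ at the point $y$ essentially realises $T(f\mathbf{1}_{Q_0\setminus P})(y)$ up to an error controlled by $Mf(y)$. One then transitions from $y$ to $x$ across $P$ by integrating the kernel regularity \eqref{e:smooth} over a dyadic annular decomposition of $Q_0\setminus P$ surrounding $P$; the resulting series telescopes to a constant times the Dini integral \eqref{e:dini} times the maximal average of $|f|$ on a dilate of $P$.

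The $3^d$ grids enter only at this last moment: the dilate $\widehat P\supset 3P$ is in general not dyadic in $\mathcal{D}$, but the one-third trick furnishes such a dyadic $\widehat P$ in one of $3^d$ shifted grids $\mathcal{D}^{\alpha}$. Grouping, over all stages of the recursion, the cubes $\widehat P$ by their grid-label gives the desired decomposition $\mathsf{S}=\mathsf{S}^{1}+\cdots+\mathsf{S}^{3^d}$ with each $\mathsf{S}^\alpha$ sparse in $\mathcal{D}^\alpha$; the sparseness of each $\mathsf{S}^\alpha$ is inherited from $|\bigcup\mathcal{E}|\le\tfrac12|Q_0|$ at every scale. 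The main obstacle—and what distinguishes this argument from \cites{MR3065022,MR3085756,14094351}—is obtaining the transition estimate of the previous paragraph under only \eqref{e:dini} rather than its $\log$-strengthening. The point is that by carrying $T_\sharp$ through the proof (in place of pointwise values of $T$), the bad-set removal is paid for exactly once, at the top of the recursion and outside the H\"ormander-type dyadic sum, whose convergence then demands only the plain Dini integral.
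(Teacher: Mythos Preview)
Your proposal contains a genuine gap at the decomposition step.  You write
\[
Tf(x)=T(f\mathbf 1_{Q_0\setminus P})(x)+T(f\mathbf 1_{P})(x),\qquad x\in P,
\]
and claim that the first term is controlled by $\langle|f|\rangle_{\widehat P}$ with $\widehat P\supset 3P$.  This is false.  Split $f\mathbf 1_{Q_0\setminus P}=f\mathbf 1_{(3P)^c}+f\mathbf 1_{3P\setminus P}$.  The far piece $T(f\mathbf 1_{(3P)^c})(x)$ is indeed handled by your H\"ormander/Dini comparison with the good point $y\in P^a\setminus E$, and the resulting annular sum is bounded by $Mf(y)\lesssim\langle|f|\rangle_{Q_0}$.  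But the near piece $T(f\mathbf 1_{3P\setminus P})(x)$ is a genuine singular integral: for $x\in P$ close to $\partial P$ the kernel has no separation from the support of $f\mathbf 1_{3P\setminus P}$, and no average of $|f|$ over any fixed dilate of $P$ controls it.  (Take $f$ an $L^1$ bump concentrated just outside $P$ near $x$.)  This is exactly the obstruction that makes the Euclidean argument harder than the martingale one, where $T_Pf$ in \eqref{e:zclaim} captures the local part perfectly.

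The paper's proof circumvents this by never forming $T(f\mathbf 1_{P})$.  Instead it recurses on the \emph{localized maximal truncation} $T_{\sharp,P}f$, which by definition truncates at scales strictly smaller than $\tfrac1{12}\operatorname{dist}(x,\partial P)$ and therefore never sees the boundary layer; the monotonicity \eqref{e:monotone} makes the recursion go.  The selection of the recursion cubes is also more delicate than taking maximal dyadic cubes in $E$: one first finds, for each bad point $x$, the largest scale $\sigma_x$ at which the truncation first reaches the threshold, then uses a lower/upper semi-continuity argument (this is where the Dini condition enters, via \eqref{e:abbrev} and \eqref{e:supB}) to produce cubes $B_x$ that already live well inside $P$.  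Only afterward are these replaced by dyadic cubes via Lemma~\ref{l:shift}, which is the source of the $3^d$ grids.  Finally, your one-line sparseness claim (``inherited from $|\bigcup\mathcal E|\le\tfrac12|Q_0|$'') is not enough: the sparse cubes are dilates $\widehat P$ of the stopping cubes, these dilates can overlap across the $3^d$ grids, and the paper spends a page (the Type~A/Type~B analysis at the end of \S\ref{s:euclid}) verifying the sparseness of each $\mathcal S_u$.
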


To explain the conclusion of the Theorem, we say that $ \mathcal D$ is a  \emph{dyadic grid} of $ \mathbb R ^{d}$,  if $ \mathcal D$
is a collection of cubes $ Q \subset \mathbb R ^{d}$ so that for each integer $ k \in \mathbb Z $, 
the cubes  $ \mathcal D _{k} :=\{Q\in \mathcal D \;:\;  \lvert  Q\rvert = 2 ^{-kd} \}$ partition $ \mathbb R ^{d}$, 
and these collections form an increasing filtration on $ \mathbb R ^{d}$.  
Then, a \emph{sparse operator} is one that is sparse in the sense of \eqref{e:sparse}, relative to a dyadic grid 
and Lebesgue measure.   

\smallskip 

To  prove the Theorem,  we use the  standard fact that the maximal truncations of $ T$ satisfy a weak-$L^1$ inequality, 
see \cite[\S4.2]{MR0290095}. 

\begin{lemma}\label{l:Tweak} For a Calder\'on-Zygmund operator $ T$ with Dini modulus of continuity, 
we have $ \lambda \lvert  \{ T _{\sharp} f > \lambda \}\rvert \lesssim \lVert f\rVert _1 $, for all $ \lambda >0$, 
where $T _{\sharp} $ are the maximal truncations 
\begin{equation} \label{e:Sharp}
T _{\sharp} f (x) := \sup _{\delta >0}  \Bigl\lvert \int _{\lvert  x-y\rvert > \delta  }  K (x,y)f (y) \; dy\Bigr\rvert . 
\end{equation}
\end{lemma}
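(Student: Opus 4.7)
The plan is to follow the standard Calder\'on--Zygmund program combined with a Cotlar-type pointwise estimate, essentially as in \cite[\S4.2]{MR0290095}. The argument has three stages: first, establish weak-$(1,1)$ for $T$ itself; second, upgrade to the maximal truncations via a Cotlar inequality; third, combine the two.

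For the first stage, I would apply the Calder\'on--Zygmund decomposition of $f$ at height $\lambda$, writing $f = g + b$ with $\lVert g\rVert_\infty \lesssim \lambda$, $\lVert g\rVert_1 \le \lVert f\rVert_1$, and $b = \sum_Q b_Q$ where each $b_Q$ is supported on a selected dyadic cube $Q$, has mean zero, satisfies $\int \lvert b_Q\rvert \lesssim \lambda \lvert Q\rvert$, and the selected cubes satisfy $\sum \lvert Q\rvert \lesssim \lVert f\rVert_1/\lambda$. The good part is handled by $L^2$-boundedness: $\lVert Tg\rVert_2^2 \lesssim \lambda\lVert f\rVert_1$, followed by Chebyshev on $\{\lvert Tg\rvert > \lambda/2\}$. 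For the bad part, the exceptional set $E := \bigcup 2Q$ has measure $\lesssim \lVert f\rVert_1/\lambda$, and on its complement the mean-zero property allows the standard rewriting
\begin{equation*}
T b_Q (x) = \int_Q [K(x,y) - K(x, y_Q)] b_Q(y) \, dy,
\end{equation*}
with $y_Q$ the center of $Q$. Bounding the bracket via \eqref{e:smooth} together with $\lvert x - y_Q\rvert \asymp \lvert x - y\rvert$ for $x \notin 2Q$, and decomposing $\mathbb R^d \setminus 2Q$ into dyadic annuli, yields $\int_{\mathbb R^d \setminus 2Q} \lvert Tb_Q(x)\rvert\, dx \lesssim \lVert b_Q\rVert_1$ precisely because of the Dini condition \eqref{e:dini}. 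Summing over $Q$ gives weak-$(1,1)$ for $T$.

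For the second stage, I would prove a Cotlar-type pointwise inequality
\begin{equation*}
T_\sharp f(x) \lesssim M_r(Tf)(x) + M f(x), \qquad 0 < r < 1,
\end{equation*}
where $M_r g := M(\lvert g\rvert^r)^{1/r}$. Fixing $\delta>0$, I split $f = f \mathbf 1_{B(x,\delta)} + f\mathbf 1_{B(x,\delta)^c}$ and, for $z \in B(x,\delta/2)$, write $T(f\mathbf 1_{B(x,\delta)^c})(x)$ as $Tf(z) - T(f \mathbf 1_{B(x,\delta)})(z)$ plus the error $T(f\mathbf 1_{B(x,\delta)^c})(x) - T(f\mathbf 1_{B(x,\delta)^c})(z)$. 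This error is controlled by decomposing $\{\lvert x-y\rvert > \delta\}$ into dyadic annuli and applying \eqref{e:smooth}, with the sum of annular contributions bounded by $Mf(x)$ times $\sum_k \omega(2^{-k})$, which is finite by \eqref{e:dini}. Averaging over $z$ in the $L^r$ sense, I use Kolmogorov's inequality (fed by the weak-$(1,1)$ bound for $T$ from the first stage) to dominate the $L^r$-average of $\lvert T(f\mathbf 1_{B(x,\delta)})(z)\rvert$ by $Mf(x)$, while the $L^r$-average of $\lvert Tf(z)\rvert$ is $\le M_r(Tf)(x)$ by definition. Taking the supremum in $\delta$ produces the Cotlar bound.

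The third stage is immediate: $Mf \in L^{1,\infty}$ by Hardy--Littlewood, and $M_r(Tf) \in L^{1,\infty}$ because $M_r$ maps $L^{1,\infty}$ to $L^{1,\infty}$ for $r < 1$ together with $Tf \in L^{1,\infty}$ from the first stage, so $T_\sharp$ is weak-$(1,1)$. The main obstacle, and the only step where the full strength of the Dini condition is used (as opposed to merely a H\"older modulus), is the $x$-versus-$z$ comparison in the Cotlar estimate: the sum of $\omega(2^{-k})$ across dyadic scales must converge, which is exactly \eqref{e:dini}. Everything else is bookkeeping with standard tools.
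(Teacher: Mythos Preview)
The paper does not supply its own proof of this lemma; it states it as a ``standard fact'' and refers the reader to \cite[\S4.2]{MR0290095}. Your proposal is correct and is precisely the classical argument one finds in that reference: Calder\'on--Zygmund decomposition to get weak-$(1,1)$ for $T$, then a Cotlar-type pointwise inequality (using Kolmogorov and the Dini condition to control the kernel comparison across dyadic annuli) to pass to $T_\sharp$. One small remark: the Dini condition enters with equal force in both stages---the annular sum $\sum_k \omega(2^{-k})$ appears in the bad-part estimate of stage one just as it does in the $x$-versus-$z$ comparison of stage two---so your closing sentence singling out stage two as ``the only step'' is a slight overstatement, though not a mathematical error.
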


There are a finite  number of choices of grids which well-approximate any cube in $ \mathbb R ^{d}$. 
This old observation has a proof in  \cite[Lemma 2.5]{MR3065022}. 

\begin{lemma}\label{l:shift} There are choices of dyadic grids $ \mathcal D _{u}$, for $ 1\leq u \leq  3 ^{d}$, 
so that for \emph{any} cube $ P\subset \mathbb R ^{d}$, there is a choice of $ 1\le u \leq 3 ^{d}$, and dyadic 
$ Q\in \mathcal D _u$ such that $  P\subset Q  $,  and $  \ell Q \le 6 \ell P$, where $ \ell P= \lvert  P\rvert ^{1/d} $ is the  side length of $ P$. 
\end{lemma}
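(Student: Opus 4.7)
\textbf{Proof proposal for Lemma~\ref{l:shift}.}

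The strategy is to construct the $3^d$ grids as scale-dependent translates of the standard dyadic grid and then verify the approximation property coordinatewise, via a pigeonhole on three equally-spaced offsets at the chosen scale.

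For each $\alpha \in \{0, 1/3, 2/3\}^d$ I would set
\[
\mathcal{D}_\alpha := \bigcup_{k \in \mathbb{Z}} \bigl\{ 2^{-k}\bigl([0,1)^d + m + (-1)^k \alpha\bigr) : m \in \mathbb{Z}^d \bigr\}.
\]
The first task is to check that each $\mathcal{D}_\alpha$ really is a dyadic grid. At each scale $k$ the cells are translates of $[0, 2^{-k})^d$ along the lattice $2^{-k}\mathbb{Z}^d$, so they partition $\mathbb{R}^d$. For the dyadic nesting from scale $k$ to scale $k+1$, a direct computation shows that the scale-$k$ cube indexed by $m$ decomposes as the disjoint union of the $2^d$ scale-$(k+1)$ cubes indexed by $m' = 2m + 3(-1)^k \alpha + \delta$, $\delta \in \{0,1\}^d$. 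These are legitimate indices in $\mathbb{Z}^d$ precisely because $3\alpha \in \mathbb{Z}^d$, and the alternating factor $(-1)^k$ is exactly what compensates for the doubling of scale between successive levels.

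Next, given a cube $P$ of side length $\ell$, I would choose the unique $k \in \mathbb{Z}$ with $2^{-k} \in (3\ell/2,\, 3\ell]$ and argue one coordinate at a time. Because $(-1)^k$ merely permutes the two nonzero values of $\alpha_i$, the left-endpoint offsets of the three one-dimensional grids at scale $k$ form the equally-spaced set $\{0,\, 2^{-k}/3,\, 2\cdot 2^{-k}/3\}$ modulo $2^{-k}$. The projection $P_i$ (an interval of length $\ell$) fails to fit inside a scale-$k$ cell of $\mathcal{D}_{\alpha_i}^{(1)}$ iff its left endpoint, reduced modulo $2^{-k}$, lies in an arc of length $\ell$ ending at the corresponding offset. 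For this failure to happen in all three grids simultaneously, the three offsets would have to lie in a common arc of length $\ell$ on the circle $\mathbb{R}/(2^{-k}\mathbb{Z})$; but any arc containing three equally-spaced points must have length at least $2\cdot 2^{-k}/3 > \ell$ by the choice of $k$, a contradiction. Hence some $\alpha_i$ works, and taking the product over $i$ produces a cube $Q \in \mathcal{D}_\alpha$ with $P \subset Q$ and $\ell Q = 2^{-k} \leq 3\ell \leq 6\ell$.

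The main obstacle is really just verifying that $\mathcal{D}_\alpha$ is dyadic. The naive translate $\mathcal{D}_0 + \alpha$ is a dyadic grid, but its offset modulo $2^{-k}$ is the fixed $\alpha$, which is tiny compared with the cell size $2^{-k}$ at larger scales; this collapses the three offsets to near-coincidence and ruins the pigeonhole. The alternating sign $(-1)^k$ is the minimal algebraic device that distributes the three offsets evenly at every scale while preserving dyadic nesting, and once it is in place the rest of the argument is routine bookkeeping.
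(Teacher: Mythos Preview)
The paper does not supply its own proof of this lemma; it cites Lemma~2.5 of Hyt\"onen--Lacey--P\'erez and notes only that the grids are ``defined by appropriate shifts of a dyadic grid.'' Your argument is precisely the standard \emph{one-third trick} that underlies the cited reference, and it is correct: the decomposition $m' = 2m + 3(-1)^k\alpha + \delta$ with $\delta\in\{0,1\}^d$ verifies the dyadic nesting (this is where $3\alpha\in\mathbb Z^d$ is used), and the pigeonhole on the three equally spaced offsets $\{0,\,2^{-k}/3,\,2\cdot 2^{-k}/3\}$ at the chosen scale is exactly the right mechanism. In fact your choice $2^{-k}\in(3\ell/2,\,3\ell]$ yields $\ell Q\le 3\,\ell P$, a sharper constant than the $6$ stated in the lemma.
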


The grids of the Lemma  are defined by appropriate shifts of  a dyadic grid. 
Abusing common terminology, we say that $ Q$ is a \emph{dyadic cube} if it is in any of the collections $ \mathcal D_u$, $ 1\leq u \leq 3 ^{d}$. 
And, we will write $ Q\in \mathcal D _{u (Q)}$.

Since the proof  of Theorem~\ref{t:continuous} is recursive, we need to adapt the definitions of maximal functions and maximal truncations to 
 a cube $ P$.  Set, for $ x\in P$, 
\begin{equation*}
M _{P} f (x) := \sup _{0 < t <  \textup{dist}(x, \partial P)} 
A_t f (x), \qquad  A_t     f  (x) :=   \int \lvert  f (x-y)\rvert   \psi (y/t)\;dy/t ^{d} .
\end{equation*}
Here and below, $ \psi $ is a  smooth function with $ \mathbf 1_{[-1/2, 1/2] ^{d}} \le \psi \le \mathbf 1_{ [-1, 1] ^{d}} $. 
Likewise, the maximal truncations are adapted to a particular cube $ P$: For $ x\in P$, 
 define 
\begin{gather*}
T _{\sharp, P}  f (x) := \sup _{0 <s<  t <  \tfrac 1{12}\textup{dist}(x, \partial P)  }\lvert    T _{s,t} f (x) \rvert, \qquad x\in P,  
\\ \textup{where} \quad 
T_{s, t}  f (x) := 
\int K (x,y) f (y) [ \psi ((x-y)/ t) - \psi ((x-y)/s) ] \; dy . 
\end{gather*}
The truncation levels are taken to stay inside $ P$, as we  measure the distance with respect to the 
$ \ell ^{\infty }$ norm on $ \mathbb R ^{d}$. For $ x\not\in P$, set $M _{P} f (x) = T _{\sharp, P}  f (x)=0$. 
It follows that this definition is monotone, an important fact for us: For $ P\subset Q$
\begin{equation}\label{e:monotone}
T _{\sharp, P} f = T _{\sharp, P} (f \mathbf 1_{P}) \leq  T _{\sharp, Q} f  . 
\end{equation}
Both $ M_P$ and $ T _{\sharp, P}$ still satisfy the weak $ L ^{1}$ inequality.

The main step in the recursion is in this.

\begin{lemma}\label{l:main} There is a finite constant  $   C  $ so that  
for all integrable functions $ f$  supported on a  cube $   P$,    
there  is a collection $ \mathcal P (P)$ of   dyadic cubes $ Q  \subset  P$, 
so that: 
\begin{enumerate}
\item  (The cubes have small measure.)  There holds 
\begin{equation}\label{e:SQ<}
\sum_{Q\in \mathcal P (P)} \lvert  Q \rvert < 3 ^{-3d-3}  \lvert  P\rvert . 
\end{equation}

\item  (The cubes are approximately disjoint.)  If $ Q' \subset Q$, and $ Q',Q\in \mathcal P (P)$,  then $ Q'=Q$. 

\item  ($ Tf$ is controlled.)  Pointwise, there holds 
\begin{equation} \label{e:m2}
 T _{\sharp, P} f    \le C \langle \lvert  f\rvert  \rangle_{P}  +  \max _{ Q\in \mathcal P (P) }   T _{\sharp, Q}  f. 
\end{equation}
\end{enumerate}
\end{lemma}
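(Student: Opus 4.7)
The plan is to mirror the martingale recursion of Theorem~\ref{t:lerner}. For a large constant $C_0$, to be fixed below, let
\begin{equation*}
E := \{ x\in P \;:\; M_P(f\mathbf{1}_P)(x) > C_0\langle |f|\rangle_P \} \cup \{ x\in P \;:\; T_{\sharp, P} f(x) > C_0\langle |f|\rangle_P \}.
\end{equation*}
Both defining sets are open, so $E$ is open. By Lemma~\ref{l:Tweak} and the weak $L^{1}$ bound for $M_P$, I fix $C_0$ so large that $\lvert E\rvert \le 6^{-d}\cdot 3^{-3d-3}\lvert P\rvert$.

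For properties (1) and (2), I decompose $E$ by a Whitney-type covering into cubes from the $3^{d}$ grids: for each Whitney cube $W$ of $E$ (so $W\subset E$ with $\ell W \approx \textup{dist}(W, E^c)$), Lemma~\ref{l:shift} produces a dyadic cube $Q\in\mathcal{D}_{u(W)}$ with $W\subset Q$ and $\ell Q\le 6\ell W$. The resulting family $\{Q_j\}$ covers $E$ and satisfies $\sum_j\lvert Q_j\rvert\le 6^{d}\lvert E\rvert < 3^{-3d-3}\lvert P\rvert$. Taking maximal elements produces a non-nested subfamily $\mathcal{P}(P)$ that still covers $E$, yielding (1) and (2). (If the enlargement $Q$ threatens to leave $P$, then $W$ lies near $\partial P$, so $\textup{dist}(x,\partial P)\lesssim \ell W$ for $x\in W$ and the truncation $T_{\sharp, P}f$ already sees only very small scales there, which can be absorbed into the $M_P$ bound directly.)

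For (3), fix $x\in P$. If $x\notin E$, then $T_{\sharp,P}f(x)\le C_0\langle |f|\rangle_P$ is immediate and each $T_{\sharp, Q}f(x)$ with $Q\in\mathcal{P}(P)$ vanishes. Otherwise $x$ lies in a unique $Q\in\mathcal{P}(P)$, and the Whitney property supplies a point $y_0\in P\setminus E$ with $\lvert x-y_0\rvert\lesssim \ell Q$. For any admissible $0<s<t\le\tfrac{1}{12}\textup{dist}(x,\partial P)$, set $\tau:=\tfrac{1}{12}\textup{dist}(x,\partial Q)\approx\ell Q$ and split
\begin{equation*}
T_{s,t}f(x) \;=\; T_{s,\, s\vee\tau}f(x) \;+\; T_{s\vee\tau,\, t}f(x).
\end{equation*}
The first piece uses only scales admissible for $Q$ and is therefore $\le T_{\sharp, Q}f(x)$. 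For the second piece, at scales $\ge\tau\approx\ell Q$, I compare its value at $x$ to its value at $y_0$ via the kernel regularity \eqref{e:smooth}; the standard head estimate bounds the difference by a constant multiple of $\langle|f|\rangle_P\cdot\int_0^1\omega(r)\,\frac{dr}{r}$, while $T_{s\vee\tau,t}f(y_0)\le T_{\sharp,P}f(y_0)\le C_0\langle|f|\rangle_P$ since $y_0\notin E$. Taking the supremum over $s,t$ gives (3), with a constant $C$ depending on $C_0$ and the Dini integral.

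The main technical obstacle is the large-scale kernel comparison in the last step; this is the one place where the Dini condition \eqref{e:dini} genuinely enters. Doing the comparison only once, between $x$ and a single nearby $y_0$, avoids the logarithmic loss that forced previous arguments to impose the stronger $(\log 2/t)/t$-Dini hypothesis. Everything else is routine measure-theoretic bookkeeping, and the $3^{d}$ dyadic grids of Lemma~\ref{l:shift} appear solely to dyadically approximate the Whitney cubes of $E$.
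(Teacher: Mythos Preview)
Your overall strategy---Whitney-decompose the exceptional set $E$, dyadically approximate the Whitney cubes, and for $x\in Q$ compare the large-scale piece $T_{s\vee\tau,t}f(x)$ to its value at a nearby good point $y_0\notin E$---is a reasonable alternative to the paper's argument, and you correctly identify the single use of the Dini condition. However, two steps are not justified as written.

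First, you assert $\tau:=\tfrac{1}{12}\textup{dist}(x,\partial Q)\approx\ell Q$. This is false in general: $x$ lies in the Whitney cube $W\subset Q$, and since $\ell Q\le 6\ell W$, the cube $W$ may sit in a corner of $Q$, so $\textup{dist}(x,\partial Q)$ can be arbitrarily small relative to $\ell W\approx|x-y_0|$. When $\tau\ll |x-y_0|$, the kernel smoothness estimate \eqref{e:smooth} is not available on the annulus $\tau\lesssim|x-y|\lesssim|x-y_0|$, and your comparison of $T_{s\vee\tau,t}f(x)$ with $T_{s\vee\tau,t}f(y_0)$ breaks down there. (A fix is to enlarge to a dyadic $Q\supset 3W$ rather than $Q\supset W$, so that $\textup{dist}(x,\partial Q)\gtrsim\ell W$; but this must be said.)

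Second, the boundary handling is not correct. When the Whitney cube $W$ lies near $\partial P$, the nearest point of $E^c$ may be in $P^c$, so your $y_0$ need not lie in $P$, and $T_{\sharp,P}f(y_0)=0$ gives nothing. Your parenthetical claim that in this regime ``$T_{\sharp,P}f$ sees only very small scales, which can be absorbed into the $M_P$ bound'' is wrong: a truncated singular integral at small scales is \emph{not} controlled by the maximal function. Likewise the dyadic enlargement $Q$ may leave $P$, so $Q\in\mathcal P(P)$ fails the hypothesis $Q\subset P$.

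The paper sidesteps both issues by a different device: rather than Whitney cubes of $E$, for each $x\in E(C)$ it records the \emph{stopping scale} $\sigma_x$, the largest $s$ at which either $A_sf(x)$ or some $|T_{s,\tau}f(x)|$ first reaches $C\langle|f|\rangle_P$, and takes $B_x$ to be the cube of side $\sigma_x$ \emph{centered at} $x$. Centering gives $\textup{dist}(x,\partial Q_x)\gtrsim\sigma_x$ automatically, and the constraint $\tau\le\tfrac{1}{12}\textup{dist}(x,\partial P)$ built into $T_{\sharp,P}$ forces $12\sigma_x<\textup{dist}(x,\partial P)$, hence $Q_x\subset P$. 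The maximality of $\sigma_x$ then yields directly the ``upper semi-continuity'' bound: for $y\in B_x$ and scales $\gtrsim\sigma_x$, $|T_{s,t}f(y)|\lesssim C\langle|f|\rangle_P$, which replaces your comparison to $y_0$ and uses Dini in the same way.
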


The last point is an extension of \eqref{e:zclaim}. 

\begin{proof}
For a large constant $ C $ to be determined,  set 
\begin{equation*}
E(C) := \bigl\{ x \in P \;:\;  \max \bigl\{ M _{P} f (x) , T _{\sharp, P} f  (x)\bigr\} \geq C \langle  \lvert  f\rvert \rangle_{P} \bigr\}. 
\end{equation*}
It follows from Lemma~\ref{l:Tweak}, that $ \lvert  E (C)\rvert \lesssim C ^{-1} \lvert  P\rvert  $. 
 For  $ x\in E (C)$,  let  $ 0 <  \sigma _x \leq \textup{dist}( x, \partial P) $ be the largest choice of 
   $0< s <\textup{dist}( x, \partial Q_0) $  such that either $  A_{s}  f (x) = C \langle \lvert  f\rvert  \rangle _{P}$, or 
   there is a      $ s < \tau _x \leq \tfrac 1 {12 }\textup{dist}( x, \partial Q_0) $ with 
$
\lvert   T _{s, \tau _x} f (x)\rvert= C \langle \lvert  f\rvert  \rangle _{P} 
$.

We have this (standard) lower semi-continuity result: There is an absolute $ 0 < \rho < 1$ so that 
\begin{equation} \label{e:zr}
\max \{\lvert   T _{\sigma _x, \tau _x } f (x')\rvert,  A_{\sigma _x}  f (x') \}> \tfrac12 C \langle \lvert  f\rvert  \rangle _{P}, \qquad   \lvert  x-x'\rvert < \rho \sigma _x. 
\end{equation}
For the averaging operator, if  $ A _{\sigma _x } f (x) > C \langle \lvert  f\rvert  \rangle _{P}$, this is very easy, 
and so we proceed under the assumption that $ A _{s } f (x) < C \langle \lvert  f\rvert  \rangle _{P}$ for 
all $ \sigma _x \leq s \leq \textup{dist}( x, \partial Q_0)$.  
It is then   the truncated singular integral that is large, and 
we have to rely upon the Dini condition: Abbreviating a familiar calculation,  and assuming that $ t \lesssim \omega (t)$, for 
$ 0< t < 1$, as we can do, assuming $ \lvert  x-x'\rvert < \rho \sigma _x$, 
\begin{align}
\lvert    T _{\sigma_x,\tau_x} f (x') -   T _{\sigma_x,\tau_x} f (x)\rvert 
& \lesssim 
\int _{  \sigma _x/2< \lvert  x-y\rvert  < \tau _x}  
\omega \Bigl( \frac {\rho \sigma _x} {x-y} \Bigr) \frac{ \lvert  f (y)\rvert } { \lvert  x-y\rvert ^{d}} \; dy 
\\ \label{e:abbrev}
& \lesssim 
\langle \lvert  f\rvert  \rangle_{P} \int _{\sigma _x/2} ^{\infty } 
\omega \Bigl( \frac {\rho \sigma _x} {t} \Bigr) \; \frac {dt} t 
\\
& \lesssim  \langle \lvert  f\rvert  \rangle_{P} \int _0 ^{2\rho} \omega (s) \frac {ds}s.
\end{align}
Thus,  under the Dini condition \eqref{e:dini}, we see that for $\rho = \rho (\psi, \omega ) $ sufficiently small, \eqref{e:zr} holds.  (In particular, $ \rho $ is independent of $ C$.) 

There is a second upper semi-continuity result. Set $ B_x := x +  \sigma _x [-1/2,1/2] ^{d}  \subset P$.  
For any $ y\in B_x$, and any 
$ \frac13\sigma_x < s < t < \tfrac 14\textup{dist}(x, \partial P)$, 
\begin{equation}\label{e:supB} 
\Bigl\lvert \int K (y,z) f (z) [ \psi ({y-z}/{ t}) - \psi ((y-z)/s) ] \; dz \Bigr\rvert \lesssim C \langle \lvert  f\rvert  \rangle_{P}. 
\end{equation}
That is, on $ B_x$, very large values of $ T _{\sharp, P} f (y)$ cannot arise from $ f \mathbf 1_{(2B_x) ^{c}}$, 
since otherwise we violate the definition of $ \sigma _x$. 

\smallskip

We have just seen  that $\rho B_x \subset E (C/2)$. Thus,  
\begin{equation*}
\lvert  E_0 \rvert  \lesssim \rho ^{-d} C ^{-1} \lvert  P\rvert, 
\qquad  E_0:=\bigcup _{x\in E (C)}B_x .  
\end{equation*}
And, if $ y\in P \setminus E_0 $, then $ \lvert  T _{\sharp, P} f (y)\rvert  \leq  C \langle \lvert  f\rvert  \rangle_{P}$. 

The path from the cubes $ B_x$ to dyadic cubes uses Lemma~\ref{l:shift}.  
To each cube $ B_x$, associate a choice of dyadic cube $   Q_x \supset B_x $ with 
 $ \ell Q_x \leq 6 \sigma _x  $.  
Since $ 12\sigma _x < \textup{dist}( x, \partial P)$, it follows that $ Q_x \subset P$.  
Then, we require 
\begin{equation*}
\lvert  E_1 \rvert \leq  3 ^{-3d-3}  \lvert  P\rvert,  \qquad  E_1 := \bigcup _{x\in E_0} Q_x ,  
\end{equation*}
which is true for large enough $ C $, thus specifying this constant.

The collection  $\mathcal P (P)$ claimed in the Lemma are the maximal cubes in 
$
 \{Q_y \;:\; y\in E_0\} 
$. 
Properties (1) and (2) are immediate, and it remains to check (3), namely to bound 
 $ T _{\sharp, P} f (y)$. 
If $ y \in P \setminus E_0$, then $ T _{\sharp, P} f (y)  \leq C \langle \lvert  f\rvert \rangle_P$. 
And, otherwise, $ y \in B _{y} \subset Q$, for some  $Q\in \mathcal P (P)$. 
But it follows from \eqref{e:supB} and monotonicity again   that we have 
\begin{equation*}
 T _{\sharp, P} f (y) \leq C' \langle \lvert  f\rvert  \rangle _{P} + T _{\sharp, B_{y} }f (y) 
\leq C'  \langle \lvert  f\rvert  \rangle _{P} + T _{\sharp, Q_{x} }f (y).
\end{equation*}
Here, $ C'$ is the implicit constant in \eqref{e:supB}.   And, this completes our proof.  
\end{proof}

\begin{proof}[Proof of Theorem~\ref{t:continuous}] 
Take a compactly supported function $ f\in L ^{1} (\mathbb R ^{d})$, and  a dyadic cube  $ P \in \mathcal D _{u (P)}$   large enough that $   (T_ {\sharp} f) \mathbf 1_{P} \leq T _{\sharp, P} f + C \langle \lvert  f\rvert  \rangle_P $. 
(The term $ \langle \lvert  f\rvert  \rangle_P$ is added to address boundary effects in the definition of $ T _{\sharp , P} f$.) 
The standard kernel estimate then implies that off of $ P$, there holds 
\begin{equation*}
 T_ {\sharp} f (x)  \mathbf 1_{P ^{c}} (x) \lesssim  \frac { \lVert f\rVert_1} { [ \ell P +  \textup{dist}(x, P) ] ^{d}} . 
\end{equation*}
Apply Lemma~\ref{l:shift} to the cubes $ 2 ^{n}P$, $ n\geq1$, to select dyadic $ P_n$ meeting the conclusion of the Lemma. 
Add $ P $ to $ \mathcal S _ {u (P)}$, and 
$ P_n$ to $ \mathcal S _{u (P_n)}$, where  $ \mathcal S _{u} \subset \mathcal D_u$ is the sparse collection of dyadic
cubes that defines $ \mathsf S _{u}$.   
Then,  it follows  that 
\begin{equation*}
  T _{\sharp} f (x)\lesssim 
\sum_{u=1} ^{3^d}\mathsf S _{u} \lvert  f\rvert (x), \qquad  x \not\in P.     
\end{equation*}

It remains to dominate $ T _{\sharp, P}f$.
Apply Lemma~\ref{l:main}, so that in particular \eqref{e:m2} holds. 
We will rewrite \eqref{e:m2} in the form  below, to set up the recursion. 
\begin{equation*}
   T _{\sharp, P} f  \le C 
   \sum_{u=1} ^{3d}\mathsf S _{u} ^{0} \lvert  f\rvert 
 +  \max _{ Q \in \mathcal P _0 }   T _{\sharp, Q}  (f )
\end{equation*}
Above, all of the sparse operators are zero, except for $ \mathsf S _{u (P)} ^{0}$, and the associated 
sparse collection is $ \mathcal S ^{0} _{u (P)} = \{P\}$. 
We will refer to the cubes in $ \mathcal P (P)= \mathcal P_0$ as \emph{$ \sharp$-descendants of $ P$.}
The inductive step is to apply Lemma~\ref{l:main} to those $ Q \in \mathcal P_0$ with maximal side lengths. 
The details are as follows. Suppose that we have this estimate below for some integer $ t\geq 0$. 
\begin{equation}\label{e:B1}
   T _{\sharp, P} f (x)\le C \sum_{u=1} ^{3d}\mathsf S _{u} ^{t} \lvert  f\rvert 
 +  \max _{  Q\in \mathcal P_t }   T _{\sharp, Q}  (f ), 
\end{equation}
where   these properties hold: 
\begin{description}
\item[T0]  $ \mathsf S_{u} ^{t}$ is a  operator defined by  collection  $\mathcal S _{u} ^{t}\subset  \mathcal D_u$, for $ 1\le u \leq 3 ^{d}$, 
as in the equation \eqref{e:Sparse}.  (The operator will be sparse, but we do not assert that at this point.) 

\item[T1] All cubes $ Q\in \mathcal P_t$ are dyadic.   If  $ Q'\subset Q $ for any two $ Q,Q'\in \mathcal P_t$, then $ Q'=Q$.  
Each $ Q\in \mathcal P_t$ is a $ \sharp$-descendant of some cube $ S\in \bigcup _{u=1} ^{3 ^{d}} \mathcal S _{u} $.  

\item[T2] If $ Q \cap S \neq 0$, where  $ Q\in \mathcal P _t$ and $ S \in  \mathcal S ^{t} := \bigcup _{u=1} ^{3 ^{d}}\mathcal S_u ^{t}$, then $ \ell Q < \ell S $. 

\end{description}

Then, the recursive step is as follows. 
Let $  \mathcal P_ t ^{\ast} $ be those $  Q\in \mathcal P_t $ such that $ \ell Q$ is maximal 
among all side lengths of the cubes $ \{ Q\;:\;  Q\in \mathcal P_t \}$.  
Apply Lemma~\ref{l:main} to each $ Q \in \mathcal P_t ^{\ast }$. We have 
\begin{equation*}
T _{\sharp, Q}  (f ) 
\leq C \langle  \lvert  f\rvert  \rangle _{Q} + 
 \max _{Q'\in  \mathcal P (Q) }  T _{\sharp,  Q'}  (f  ), 
\end{equation*}
where the cubes $ \mathcal P (Q)$ satisfy the conclusions of Lemma~\ref{l:main}. 

It follows from \eqref{e:B1} that pointwise, 
\begin{align*}
   T _{\sharp, P} f & \le C \sum_{u=1} ^{3 ^{d}} \mathsf S _{u} ^{t}  \lvert  f\rvert  
\\& \qquad + \max \Bigl\{ \max _{ Q\in \mathcal P _t \setminus \mathcal P_t ^{\ast}  }   T _{\sharp, Q}  (f  ) ,      \max _{Q \in \mathcal P_t ^{\ast }} \bigl\{ 
C \langle  \lvert  f\rvert  \rangle _{Q} \mathbf 1_{Q}
+  \max _{ Q' \in \mathcal P (Q) }   T _{\sharp, Q'}  (f  ) \bigr\}\Bigr\} 
\\
& \le C  \sum_{u=1} ^{3 ^{d}}\mathsf S _{u}  ^{t+1} \lvert  f\rvert   
+  \max _{Q \in \mathcal P _{t+1} }   T _{\sharp, Q}  (f  ). 
\end{align*}
In the last line, the new  operator $ \mathsf S_u ^{t+1}$ is obtained from $ \mathcal S_u ^{t}$ by 
adding to $ \mathcal S_u ^{t}$ those cubes $ \{ Q  \in \mathcal P_t ^{\ast}  \;:\; u (Q) = u\}$. 
By monotonicity \eqref{e:monotone}, we take the  collection $ \mathcal P_{t+1}$ 
to be  those cubes in 
\begin{equation*}
( \mathcal P _t \setminus \mathcal P_t ^{\ast})   \cup 
\bigcup _{\mathcal P_t ^{\ast}  }  \mathcal P (Q) 
\end{equation*}
that are maximal, with respect to inclusion.  
\smallskip 

After the recursion finishes, it only remains to show that the limiting operators $ \mathsf S _{u}$ are 
sparse. 
Take $ S\in \mathcal S _u$,  and fix the smallest integer $ t$ such that $ S\in \mathcal S_u ^{t}$.  
There are three sources of $ \mathcal S_u$-children for $ S$,  either directly, or through further $ \sharp$-descendants: 
\begin{description}
\item[Type A]  
Cubes in $ Q\in \mathcal P (S')$, for some   $S' \in  \mathcal S ^{t} _{u'}$  with $ \ell S' > \ell S$. 

\item[Type B] 
Cubes in $ Q\in \mathcal P (S')$, for some   $S' \in  \mathcal S ^{t}$  with $ \ell S' \leq   \ell S$. 


\end{description}

Cubes of Type A cannot occur:   Since $ \ell S' > \ell S$, the smallest integer $ t'$ such that $ S'\in \mathcal S ^{t'} _{u'}$ 
is strictly smaller than $ t$.  Let $ Q\in \mathcal P (S')$ be contained in $ S$. 
If at any time in the recursion from $ t'\leq s < t$, 
both cubes $ Q$ and $ S$ were potential members of $ \mathcal P _s$, then $ Q$ would have been eliminated by monotonicity, 
namely using \eqref{e:monotone}.

Concerning cubes of Type B,  fix an integer $ n$, and  let $ \mathcal B_n$ be those 
$ S' \in \bigcup _{v=1} ^{3 ^{d}} \mathcal S_v ^{t}$, 
of side length $ 2 ^{-n} \ell S$ that intersect $ S$, but are not contained in it. These are necessarily contained in 
$ (1+ 2 ^{-n} )S \setminus (1- 2 ^{-n} S)$, for dimensional constant $ c$.  And, they can overlap at most $ 3^{d}$ times. 
The total measure of all $ \sharp$-descendants of each $ S'$ is at most $ 3 ^{-3d-3} \lvert  S'\rvert $, 
so the total measure of potential children from  all of these cubes is at most 
\begin{align*}
3 ^{-3d-3}  \sum_{n=0} ^{\infty }\sum_{S'\in \mathcal B_n} \lvert  S'\rvert 
& \leq 3 ^{-2d-3}  \lvert  S\rvert   \sum_{n=0} ^{\infty } (1+2 ^{-n}) ^{d} - (1- 2 ^{-n}) ^{d}  
\\
& \leq 3 ^{-d-3}  \sum_{n=0} ^{\infty } 2 ^{-n+1} \lvert  S\rvert \leq 3 ^{-d-1} \lvert  S\rvert.   
\end{align*}
Therefore, the collections $ \mathcal S_u$ are sparse, and the proof of the Theorem is complete. 

\end{proof}

\section{Complements} \label{s:complements} 

In late 1970's, several authors considered martingale analogs of the $ A_p$ theory. 
For instance, Izumisawa-Kazamaki\cite{MR0436313}, proved a variant of the Muckenhoupt 
maximal function result  \cite{MR0293384 } in this setting. 
When it came to martingale transforms, the distinction between the homogeneous and 
non-homogeneous cases was already recognized by these authors.   
Nevertheless, norm inequalities for martingale transforms were proved by Bonami-L\'epingle \cite[Th. 1]{MR544802} 
in 1978.\footnote{
These references do not seem to be as well known as they should be.  Beyond the setting of discrete martingales 
of this paper, some of these references consider continuous time martingales, with cadalag sample 
paths.} 
Their proof follows the  good-$ \lambda $ approach, which does not yield sharp constants.  

The Lerner mean oscillation  inequality \cite{MR2721744} gives a pointwise bound on an arbitrary 
measurable function $ \phi $ by 
a sum over a sparse dyadic collection of cubes, $ Q\in \mathcal S$. 
Applying this inequality Calder\'on-Zygmund operators $ T$ applied to  $ L ^{1} $ function $ f$, 
 leads a number of `tail issues', see \cites{MR3065022,MR3085756}. 

Lerner-Nazarov and Conde-Rey \cites{14094351,150805639} provide pointwise domination of $ Tf$ by 
sparse operators. Their proofs work on dyadic operators, and `lose a log' in passage to the continuous case.  

The proof just described recursively selects advantageous sparse cubes,  requiring 
 only  the weak-$L^1$ inequality for maximal truncations, and basic facts about dyadic 
grids.  Thus, the technique works with only modest changes for UMD valued functions.  (In contrast, 
the mean oscillation inequality requires  a concept of `median', see \cite{12106236}.)

By an observation of Treil-Vol{\cprime }berg \cite{PC},  the argument in \S~\ref{s:euclid} can be adapted 
to the non-homogeneous Calder\'on-Zygmund theory. In particular, a  
variant of Theorem~\ref{t:continuous} holds in this setting, and it implies this result, 
stated in the language of \cites{MR1881028,MR1470373}.

\begin{theorem}\label{t:PC}[Treil, Volberg \cite{PC}]  Let $ (X, d)$ be a geometrically doubling space, 
Let $ \mu $ be a measure on $ X$ which is of order $ m$, that is  $ \mu ( B (x,r)) \le r ^{m}  $, 
where $ B (x,r) := \{y \;:\; d (x,y) \le r\}$.  
Let $ T$ be an order $ m$  $ L ^2 $-bounded Calder\'on-Zygmund operator on $ (X, \mu )$.  
For any locally finite weight  $ w$ on $ X$, positive almost everywhere, such that $ \sigma := w ^{-1} $ is 
also a weight, there holds 
\begin{equation*}
\lVert T \;:\; L ^2 (w) \mapsto L ^2 (w)\rVert \lesssim \sup _{x, r} \frac { w (B (x,r))} {\mu (B (x,3r))} \frac { \sigma  (B (x,r))} {\mu (B (x,r))}  . 
\end{equation*}
\end{theorem}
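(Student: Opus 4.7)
The plan is to transplant the machinery of \S\ref{s:euclid} to the geometrically doubling space $(X,d)$ and then reduce the weighted bound to an estimate for sparse operators. Concretely, I would first establish a non-homogeneous analog of Theorem~\ref{t:continuous}: for every compactly supported $f\in L^1(\mu)$ one has $|Tf|\lesssim \sum_{u} \mathsf S_u|f|$, where each $\mathsf S_u$ is associated with a system of Christ-type dyadic cubes on $(X,d)$ (e.g.\ from Hyt\"onen's construction, which gives a bounded number of shifted grids depending only on the doubling constant) and is sparse in the \emph{$\mu$-sense with $3$-inflation}: for every $P\in\mathcal S_u$,
\begin{equation*}
\sum_{Q\in \mathrm{ch}_{\mathcal S_u}(P)} \mu(3Q) \leq \tfrac12 \mu(3P).
\end{equation*}
The inflation factor is forced by the non-homogeneity of $\mu$ and is precisely what matches the $\mu(B(x,3r))$ in the statement.

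The bulk of the work is the recursive pointwise domination. I would imitate Lemma~\ref{l:main}, using as inputs (i) the standard weak-$L^1$ inequality for maximal truncations of non-homogeneous CZ operators (which holds on geometrically doubling metric measure spaces of order $m$), (ii) a maximal function of the form $M_P f(x)=\sup_{B\ni x,\,3B\subset P}\mu(3B)^{-1}\int_B|f|\,d\mu$, and (iii) the Christ-cube approximation lemma playing the role of Lemma~\ref{l:shift}. As before, I form $E=\{\max(M_P f,T_{\sharp,P}f)>C\langle|f|\rangle_P^{3P}\}$, where $\langle\cdot\rangle_P^{3P}:=\mu(3P)^{-1}\int_P\cdot\,d\mu$, select a stopping time $\sigma_x$ for each $x\in E$, verify the lower semi-continuity \eqref{e:zr} via the Dini condition exactly as in \S\ref{s:euclid}, thicken to balls $B_x$, and then replace them by Christ dyadic cubes $Q_x\supset B_x$ with comparable radius. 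Choosing $C$ large enough so that the union of these $Q_x$ has $\mu(3\cdot)$-measure at most $\tfrac12\mu(3P)$ produces the required sparseness, and the recursion then proceeds verbatim as in the proof of Theorem~\ref{t:continuous}.

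Once pointwise domination is in hand, the norm inequality reduces to the two-weight $L^2$ bound for each $\mathsf S_u$, where by duality it suffices to estimate
\begin{equation*}
\langle \mathsf S_u(\mathbf 1\cdot\sigma), g\cdot w\rangle_\mu
= \sum_{P\in\mathcal S_u} \frac{\sigma(P)}{\mu(P)}\int_P g\,w\,d\mu.
\end{equation*}
Using the sparseness with $3$-inflation one extracts disjoint \emph{principal} sets $E(P)\subset P$ with $\mu(3E(P))\gtrsim\mu(3P)$, on which $\mu$-averages are controlled, and then Cauchy--Schwarz plus the hypothesis
\begin{equation*}
\mathcal A:=\sup_{x,r}\frac{w(B(x,r))}{\mu(B(x,3r))}\frac{\sigma(B(x,r))}{\mu(B(x,r))}
\end{equation*}
bounds the sum by $\mathcal A\,\|g\|_{L^2(w)}\|\mathbf 1\|_{L^2(\sigma)}$, giving the desired estimate on $\|T\|_{L^2(w)\to L^2(w)}$ after the standard $\sigma=w^{-1}$ change of variables.

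The main obstacle, as usual in the non-homogeneous setting, is that one cannot use $\mu(P)$ in the denominator of averages without losing control, so one must systematically work with $\mu(3P)$ (or $\mu(\alpha P)$ for an appropriate dilate): the sparseness condition, the stopping-time thresholds, the weak-$L^1$ estimate, and the final two-weight testing must all be aligned around the same inflated balls. Getting this alignment right — in particular, ensuring the stopping balls $B_x$ satisfy $3B_x\subset P$ so that the averages $\langle|f|\rangle^{3B_x}$ remain dominated by $\langle|f|\rangle^{3P}$, and that the Christ-cube approximation $Q_x\supset B_x$ preserves this chain — is where the argument genuinely differs from the Euclidean proof and where care is needed to land on the precise quantity appearing in the statement.
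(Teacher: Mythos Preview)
The paper does not actually prove this theorem. It appears in \S\ref{s:complements} only as a statement attributed to Treil and Volberg via personal communication, preceded by the single sentence ``the argument in \S\ref{s:euclid} can be adapted to the non-homogeneous Calder\'on--Zygmund theory.'' No details are given.

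Your proposal is exactly the adaptation the paper alludes to, and you have correctly identified the standard non-homogeneous modifications: Christ--Hyt\"onen dyadic systems in place of Lemma~\ref{l:shift}, averages $\mu(3P)^{-1}\int_P$ in place of $\langle\,\cdot\,\rangle_P$, sparseness formulated with respect to the inflated measure $\mu(3\,\cdot\,)$, and the non-homogeneous weak-$L^1$ bound for maximal truncations as the engine of the recursion. This is the expected route and is consistent with the asymmetric $A_2$ quantity in the statement, where one denominator carries the dilation and the other does not.

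Two small points in your final paragraph deserve tightening before this becomes a proof rather than a plan. First, in the duality computation you write $\sigma(P)/\mu(P)$ for the sparse operator, but if the operator is built from inflated averages this should be $\sigma(P)/\mu(3P)$; the mismatch matters because it is precisely what produces the $\mu(B(x,3r))$ in the conclusion. Second, ``disjoint principal sets $E(P)\subset P$ with $\mu(3E(P))\gtrsim\mu(3P)$'' is not quite well-posed, since $E(P)$ is not a ball and $3E(P)$ is undefined; the usual device is to take $E(P)=P\setminus\bigcup_{Q\in\mathrm{ch}_{\mathcal S}(P)}Q$ and use the sparseness inequality directly on $\mu(3P)$. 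With those adjustments the sketch is sound, but there is nothing in the paper to compare it against beyond the one-line remark.
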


A third application of the technique of this paper has lead to pointwise bounds for operators beyond the 
Calder\'on-Zygmund scale, an interesting new development by Bernicot-Frey-Petermichl  \cite{151000973}.

Further applications to the multi-linear Calder\'on-Zygmund theory \cite{MR3232584,MR3302105} 
should also be possible. In particular, \cite{14094351}*{Corollary A.1} dominates a multi-linear Calder\'on-Zygmund 
operator by a sparse operator, and from there one deduces weighted inequalities. 
The modulus of continuity of the operator is assumed to satisfy a logarithmic Dini condition, 
which can presumably be relaxed to just a Dini condition.

\begin{bibdiv}
\begin{biblist}

\bib{151000973}{article}{
   author = {{Bernicot}, F.},
   author={Frey, D.}, 
   author={Petermichl, S.},
    title = {Sharp weighted norm estimates beyond Calder$\backslash$'on-Zygmund theory},
   eprint = {1510.00973},
}

\bib{MR544802}{article}{
  author={Bonami, A.},
  author={L{\'e}pingle, D.},
  title={Fonction maximale et variation quadratique des martingales en pr\'esence d'un poids},
  language={French},
  conference={ title={S\'eminaire de Probabilit\'es, XIII (Univ. Strasbourg, Strasbourg, 1977/78)}, },
  book={ series={Lecture Notes in Math.}, volume={721}, publisher={Springer, Berlin}, },
  date={1979},
  pages={294--306},
}

\bib{MR0208647}{article}{
  author={Burkholder, D. L.},
  title={Martingale transforms},
  journal={Ann. Math. Statist.},
  volume={37},
  date={1966},
  pages={1494--1504},
  issn={0003-4851},
  review={\MR {0208647 (34 \#8456)}},
}

\bib{14094351} {article}{
   author = {Conde-Alonso, J.~M.},
   author={Rey, G.},
    title ={A pointwise estimate for positive dyadic shifts and some applications},
eprint={http://arxiv.org/abs/1409.4351},
}





\bib{MR3302105}{article}{
   author={Dami{\'a}n, Wendol{\'{\i}}n},
   author={Lerner, Andrei K.},
   author={P{\'e}rez, Carlos},
   title={Sharp Weighted Bounds for Multilinear Maximal Functions and
   Calder\'on--Zygmund Operators},
   journal={J. Fourier Anal. Appl.},
   volume={21},
   date={2015},
   number={1},
   pages={161--181},
}


\bib{12106236}{article}{
  author={{H{\"a}nninen}, T.~S.},
  author={{Hyt{\"o}nen}, T.~P.},
  title={The $A_2$ theorem and the local oscillation decomposition for Banach space valued functions},
  eprint={http://arxiv.org/abs/1210.6236},
}

\bib{MR0312139}{article}{
  author={Hunt, Richard},
  author={Muckenhoupt, Benjamin},
  author={Wheeden, Richard},
  title={Weighted norm inequalities for the conjugate function and Hilbert transform},
  journal={Trans. Amer. Math. Soc.},
  volume={176},
  date={1973},
  pages={227--251},
}

\bib{MR2912709}{article}{
  author={Hyt{\"o}nen, Tuomas P.},
  title={The sharp weighted bound for general Calder\'on-Zygmund operators},
  journal={Ann. of Math. (2)},
  volume={175},
  date={2012},
  number={3},
  pages={1473--1506},
}

\bib{MR3204859}{article}{
  author={Hyt{\"o}nen, Tuomas P.},
  title={The $A\sb 2$ theorem: remarks and complements},
  conference={ title={Harmonic analysis and partial differential equations}, },
  book={ series={Contemp. Math.}, volume={612}, publisher={Amer. Math. Soc., Providence, RI}, },
  date={2014},
  pages={91--106},
}

\bib{MR3113086}{article}{
  author={Hyt{\"o}nen, Tuomas},
  author={Kairema, Anna},
  title={What is a cube?},
  journal={Ann. Acad. Sci. Fenn. Math.},
  volume={38},
  date={2013},
  number={2},
  pages={405--412},
}

\bib{MR3129101}{article}{
  author={Hyt{\"o}nen, Tuomas P.},
  author={Lacey, Michael T.},
  title={The $A\sb p$-$A\sb \infty $ inequality for general Calder\'on-Zygmund operators},
  journal={Indiana Univ. Math. J.},
  volume={61},
  date={2012},
  number={6},
  pages={2041--2092},
}

\bib{MR3065022}{article}{
  author={Hyt{\"o}nen, Tuomas P.},
  author={Lacey, Michael T.},
  author={P{\'e}rez, Carlos},
  title={Sharp weighted bounds for the $q$-variation of singular integrals},
  journal={Bull. Lond. Math. Soc.},
  volume={45},
  date={2013},
  number={3},
  pages={529--540},
}


\bib{MR0436313}{article}{
  author={Izumisawa, M.},
  author={Kazamaki, N.},
  title={Weighted norm inequalities for martingales},
  journal={T\^ohoku Math. J. (2)},
  volume={29},
  date={1977},
  number={1},
  pages={115--124},
}  


\bib{13103507}{article}{
   author={Lacey, Michael T.},
   title = {On the Separated Bumps Conjecture for Calderon-Zygmund Operators}, 
   eprint = {http://www.arxiv.org/abs/1310.3507},
   journal={Hokkaido Math. J., to appear},
}

\bib{MR2721744}{article}{
  author={Lerner, Andrei K.},
  title={A pointwise estimate for the local sharp maximal function with applications to singular integrals},
  journal={Bull. Lond. Math. Soc.},
  volume={42},
  date={2010},
  number={5},
  pages={843--856},
}

\bib{MR3127380}{article}{
  author={Lerner, Andrei K.},
  title={On an estimate of Calder\'on-Zygmund operators by dyadic positive operators},
  journal={J. Anal. Math.},
  volume={121},
  date={2013},
  pages={141--161},
}

\bib{MR3085756}{article}{
  author={Lerner, Andrei K.},
  title={A simple proof of the $A\sb 2$ conjecture},
  journal={Int. Math. Res. Not. IMRN},
  date={2013},
  number={14},
  pages={3159--3170},
}

\bib{MR3232584}{article}{
   author={Li, Kangwei},
   author={Moen, Kabe},
   author={Sun, Wenchang},
   title={The sharp weighted bound for multilinear maximal functions and
   Calder\'on-Zygmund operators},
   journal={J. Fourier Anal. Appl.},
   volume={20},
   date={2014},
   number={4},
   pages={751--765},
   issn={1069-5869},
   review={\MR{3232584}},
   doi={10.1007/s00041-014-9326-5},
}

\bib{150805639}{article}{
   author = {{Lerner}, A.~K.}, 
   author={Nazarov, F.},
    title = {Intuitive dyadic calculus: the basics},
   eprint = {1508.05639},
}

\bib{MR3000426}{article}{
   author={Moen, Kabe},
   title={Sharp weighted bounds without testing or extrapolation},
   journal={Arch. Math. (Basel)},
   volume={99},
   date={2012},
   number={5},
   pages={457--466},
   issn={0003-889X},
   review={\MR{3000426}},
   doi={10.1007/s00013-012-0453-4},
}

\bib{MR0293384}{article}{
  author={Muckenhoupt, Benjamin},
  title={Weighted norm inequalities for the Hardy maximal function},
  journal={Trans. Amer. Math. Soc.},
  volume={165},
  date={1972},
  pages={207--226},
}

\bib{MR3127385}{article}{
   author={Nazarov, Fedor},
   author={Reznikov, Alexander},
   author={Treil, Sergei},
   author={Volberg, ALexander},
   title={A Bellman function proof of the $L^2$ bump conjecture},
   journal={J. Anal. Math.},
   volume={121},
   date={2013},
   pages={255--277},
}

\bib{MR3188553}{article}{
  author={Nazarov, Fedor},
  author={Reznikov, Alexander},
  author={Volberg, Alexander},
  title={The proof of $A_2$ conjecture in a geometrically doubling metric space},
  journal={Indiana Univ. Math. J.},
  volume={62},
  date={2013},
  number={5},
  pages={1503--1533},
}

\bib{MR1470373}{article}{
   author={Nazarov, F.},
   author={Treil, S.},
   author={Volberg, A.},
   title={Cauchy integral and Calder\'on-Zygmund operators on nonhomogeneous
   spaces},
   journal={Internat. Math. Res. Notices},
   date={1997},
   number={15},
   pages={703--726},
}

\bib{MR1881028}{article}{
   author={Orobitg, Joan},
   author={P{\'e}rez, Carlos},
   title={$A_p$ weights for nondoubling measures in ${\bf R}^n$ and
   applications},
   journal={Trans. Amer. Math. Soc.},
   volume={354},
   date={2002},
   number={5},
   pages={2013--2033 (electronic)},
}

\bib{MR2354322}{article}{
  author={Petermichl, S.},
  title={The sharp bound for the Hilbert transform on weighted Lebesgue spaces in terms of the classical $A\sb p$ characteristic},
  journal={Amer. J. Math.},
  volume={129},
  date={2007},
  number={5},
  pages={1355--1375},
}

\bib{MR0290095}{book}{
  author={Stein, Elias M.},
  title={Singular integrals and differentiability properties of functions},
  series={Princeton Mathematical Series, No. 30},
  publisher={Princeton University Press, Princeton, N.J.},
  date={1970},
  pages={xiv+290},
}


\bib{TTV}{article}{
  author={{Thiele}, Christoph},
  author={{Treil}, Sergei},
  author={Volberg, Alexander},
  title={Weighted martingale multipliers in non-homogeneous setting and outer measure spaces},
  eprint={http://arxiv.org/abs/1411.5345},
  year={2014},
}


\bib{MR3154530}{book}{
  author={Tolsa, Xavier},
  title={Analytic capacity, the Cauchy transform, and non-homogeneous Calder\'on-Zygmund theory},
  series={Progress in Mathematics},
  volume={307},
  publisher={Birkh\"auser/Springer, Cham},
  date={2014},
  pages={xiv+396},
}

\bib{14080385}{article}{
  author={Treil, S.},
  author={Vol{\cprime }berg, A. L.},
  title={Entropy conditions in two weight inequalities for singular integral operators},
  eprint={http://arxiv.org/abs/1408.0385},
  year={2014},
}

\bib{PC}{article}{
  author={Treil, S.},
  author={Vol{\cprime }berg, A. L.},
  title={Personal Communication}, 
  date={2015},
  }


\end{biblist}
\end{bibdiv}

\end{document}